\crefname{hypothesis}{Hypothesis}{Hypotheses}
\crefname{assumption}{Assumption}{Assumptions}
\title{Robust Preconditioners for Multiple Saddle Point Problems and Applications to Optimal Control Problems\thanks{
\funding{This work was supported by the Austrian Science Fund (FWF) through the projects I3661-N27, P31048 and S11702-N23.}}}
\author{Alexander Beigl\thanks{Faculty of Mathematics, University of Vienna, 1090 Vienna, Austria 
  (\email{alexander.beigl@univie.ac.at}).}
\and Jarle Sogn\thanks{Johann Radon Institute for Computational Mathematics (RICAM), Johannes Kepler University, 4040 Linz, Austria 
  (\email{jarle.sogn@ricam.oeaw.ac.at}).}
\and Walter Zulehner\thanks{Institute of Computational Mathematics, Johannes Kepler University, 4040 Linz, Austria 
  (\email{zulehner@numa.uni-linz.ac.at}).}}
\newcommand{\norm}[1]{\left\|#1\right\|}
\newcommand{\inner}[2]{\left(#1,#2\right)}
\newcommand{\dual}[2]{\left<#1,#2\right>}
\newcommand{\wave}[1]{\partial_{tt}#1-\Delta #1}
\newcommand{\LLQ}{L^2\left(Q_T\right)}
\newcommand{\LLO}{L^2(\Omega)}
\begin{document}

\maketitle

\begin{abstract}
In this paper we consider multiple saddle point problems with block tridiagonal Hessian in a Hilbert space setting. Well-posedness and the related issue of preconditioning are discussed.
We give a characterization of all block structured norms which ensure well-posedness of multiple saddle point problems as a helpful tool for constructing block diagonal preconditioners.
We subsequently apply our findings to a general class of PDE-constrained optimal control problems containing a regularization parameter $\alpha$ and derive $\alpha$-robust preconditioners for the corresponding optimality systems. Finally, we demonstrate the generality of our approach with two optimal control problems related to the heat and the wave equation, respectively. Preliminary numerical experiments support the feasibility of our method.
\end{abstract}

\begin{keywords}
  saddle point problems, PDE-constrained optimization, optimal control, robust preconditioning
\end{keywords}

\begin{AMS}
  49J20, 49K20, 65F08, 65N22
\end{AMS}

\section{Introduction}
In this paper we discuss the well-posedness of a particular class of saddle point problems in function
spaces and the related topic of robust preconditioning. We consider linear operator equations
\begin{equation}\label{eq:opeqA}
\mathcal{A}\bm{x} = \bm{b},
\end{equation}
where $\mathcal{A}:\bm{X}\longrightarrow \bm{X}'$ is a self-adjoint operator mapping from the product space $\bm{X} = X_1 \times X_2 \times \ldots \times X_n$ of Hilbert spaces $X_i$ into its dual space $\bm{X}'$.

In particular, we are interested in the case where 
$\mathcal{A} \colon \bm{X} \longrightarrow \bm{X}'$ is of $n$-by-$n$ block tridiagonal form
\begin{equation*}\label{eq:Atridi_intro}
    \mathcal{A} =     
    \begin{pmatrix} 
      A_1 & B_1'  &   & \\
      B_1 & -A_2  & \ddots &  \\
          &\ddots & \ddots & B_{n-1}' \\[1ex]
          &       & B_{n-1}   & (-1)^{n-1}A_n
  \end{pmatrix} ,
\end{equation*}
where $A_i \colon X_i \longrightarrow X_i'$, $B_i \colon X_i \longrightarrow X_{i+1}'$ are bounded linear operators, $B_i'$ is the adjoint of $B_i$, and, additionally, $A_i$ are self-adjoint and positive semi-definite. 
Under these assumptions, solutions to \cref{eq:opeqA} characterize multiple saddle points of the associated Lagrangian functional
\[
  \mathcal{L}(\bm{x}) = \frac{1}{2}\dual{\mathcal{A} \bm{x}}{\bm{x}} - \dual{\bm{b}}{\bm{x}},
\]
where $\dual{\cdot}{\cdot}$ denotes the duality product, see \cite{SogZul18} for more details of this interpretation.
The special case $n=2$ where \cref{eq:opeqA} is of the form
\begin{equation}\label{eq:classicalsaddlepointproblem}
\begin{pmatrix}
 A & B' \\ B & -C
\end{pmatrix}
\begin{pmatrix}
x \\ p
\end{pmatrix}
=
\begin{pmatrix}
f \\ g
\end{pmatrix}
\end{equation}
is usually referred to as a classical saddle point problem.

Saddle point problems in infinite-dimensional Hilbert spaces arise as the optimality systems of optimization problems in function spaces with a quadratic objective functional and constrained by a partial differential equation (PDE) or a system of PDEs. Other sources for such problems are mixed formulations of elliptic boundary value problems. For numerous applications of classical saddle point problems we refer to the seminal survey article \cite{BenGolLie05} and for applications of multiple saddle point problem we refer to \cite{SogZul18}.

Classical saddle point problems ($n = 2$) are well-studied, see \cite{BofBreFor13}. For $C = 0$, the well-known Brezzi conditions are sufficient and necessary conditions for well-posedness. This is generalized in \cite{Zul11}, where sufficient and necessary conditions, including the case $C \neq 0$,
are provided. The conditions in \cite{Zul11} also provided conditions for a robust preconditioner in the framework of operator preconditioning.

Multiple saddle point problems ($n > 2$) are less studied than classical saddle point problems. 
In \cite{SogZul18} a block diagonal preconditioner was introduced whose diagonal blocks 
consist of a sequence of so-called Schur complements. Well-posedness of \cref{eq:opeqA} could be shown with respect to the associated norm with robust estimates.
However, Schur complements do not always exist.
This becomes already apparent in the well-studied case \cref{eq:classicalsaddlepointproblem}, where $A$ needs to  be invertible only on the kernel of $B$. Then, of course, $A$ might be not invertible and, consequently, the classical Schur complement $S = C + B A^{-1} B'$ would not exist.
Therefore, a more general approach is undertaken here, where we consider general block diagonal preconditioners rather than the more restrictive class of preconditioners based on Schur complements.

An important field of applications are optimality systems of PDE-constrained optimization problems. In particular, optimal control problems are considered with objective functionals which contain a regularization term involving  some regularization parameter $\alpha$. Suitable Krylov subspace methods, e.g. the minimum residual method (MINRES), for solving the corresponding (discretized) optimality systems deteriorate for small $\alpha$ when using standard preconditioners. For most practical applications we have $0< \alpha \ll 1$, thus finding $\alpha$-robust preconditioners is essential.
For optimal control problems with an elliptic state equation $\alpha$-robust preconditioners are provided by \cite{SchZul07,PeaWat12,MarNieNor17,SogZul18}. Some time-depending problems are addressed in \cite{PeaStoWat12,LiuPea19}, however, the rigorous analysis of $\alpha$-robust preconditioners always required full observation (observation throughout the whole domain). A special case with a hyperbolic state equation was studied in \cite{BeiSchSogZul19}. There a robust preconditioner was obtained also for a problem with partial observation. Based on a new abstract theory we will derive $\alpha$-robust preconditioners without requiring full observation. The class of problems covered by the new approach include optimal control problems with elliptic, parabolic or hyperbolic state equations. The work presented here can be seen as an extension of ideas presented in  \cite{MarNieNor17}.

The paper is organized as follows. 
In \cref{sec:abstracttheory} the well-posedness of \cref{eq:opeqA} is addressed in general Hilbert spaces. The main result is contained in \cref{mainresult_new} which provides a characterization of robust block diagonal preconditioners for \cref{eq:opeqA}. 
This result can be seen as an extension of corresponding results in \cite{Zul11} to multiple saddle point problems.
In \cref{sec:optimalcontrol} the application of the abstract results to optimal control problems are discussed in general. 
\Cref{sec:examples} contains particular examples of optimal control problems with parabolic resp.~hyperbolic state equations. 
Preliminary numerical results are reported in \cref{sec:numerics}. 
Finally, a few auxiliary results needed for the abstract analysis are provided in \cref{sec:app}.

\section{Abstract theory}
\label{sec:abstracttheory}

We introduce some notation which will be used throughout the paper.

\emph{Notation} 1: For a real Hilbert space $X$ with inner product $\inner{\cdot}{\cdot}_X$, the duality pairing in its dual space $X'$ will be denoted by $\dual{\cdot}{\cdot}_X$ where we omit the subscript when the space is clear from the context.

For a bounded linear operator $B:X\longrightarrow Y'$, where $X$ and $Y$ are Hilbert spaces, its adjoint $B':Y\longrightarrow X'$ is given by
\[
 \dual{B'y}{x} = \dual{Bx}{y}\quad\text{for all} \ x\in X,\, y\in Y.
\]
A bounded linear operator $A:X\longrightarrow X'$ is said to be self-adjoint, respectively positive semi-definite, if
\[
  \dual{Ay}{x} = \dual{Ax}{y},
  \quad \text{resp.} \quad
  \dual{Ax}{x} \geq 0,
  \quad \text{for all} \  x, y  \in X.
\]
The operator $A$ is positive definite (coercive) if
\[
\dual{Ax}{x}\geq \sigma\norm{x}^2_X\quad\text{for all} \ x\in X
\]
for some positive constant $\sigma$.

Let $\bm{X} = X_1 \times X_2 \times \ldots \times X_n$ be the product space of Hilbert spaces $X_i$ for $i=1,2,\ldots,n$, endowed with the canonical inner product
\[
\inner{\bm x}{\bm y}_{\bm X} = \inner{x_1}{y_1}_{X_1}+\inner{x_2}{y_2}_{X_2}+\ldots + \inner{x_n}{y_n}_{X_n},
\]
and let the linear operator $\mathcal{A} \colon \bm{X} \longrightarrow \bm{X}'$ be of $n$-by-$n$ block tridiagonal form
\begin{equation*}\label{eq:Atridi}
    \mathcal{A} =     
    \begin{pmatrix} 
      A_1 & B_1'  &   & \\
      B_1 & -A_2  & \ddots &  \\
          &\ddots & \ddots & B_{n-1}' \\[1ex]
          &       & B_{n-1}   & (-1)^{n-1}A_n
  \end{pmatrix} ,
\end{equation*}
where $A_i \colon X_i \longrightarrow X_i'$, $B_i \colon X_i \longrightarrow X_{i+1}'$ are bounded linear operators, and, additionally, $A_i$ are self-adjoint and positive semi-definite. Here, as usual, we identify the dual space $\bm{X}'$ with $X_1' \times X_2' \times \ldots \times X_n'$.

For a given right-hand side $\bm b \in \bm X'$, we consider the linear operator equation
\begin{equation} \label{Mxb_new}
  \mathcal{A} \bm{x} = \bm{b}.
\end{equation}

We introduce two linear operators associated to $\mathcal{A}$:
\begin{equation*}
    \mathcal{D} =     
    \begin{pmatrix} 
      A_1 &   &   & \\
       & A_2  &  &  \\
          & & \ddots \\
          &       &  & A_n
  \end{pmatrix} 
  \quad \text{and} \quad
    \mathcal{B} =     
    \begin{pmatrix} 
      0 & B_1'  &   & \\
      B_1 & 0  & \ddots &  \\
          &\ddots & \ddots & B_{n-1}' \\
          &       & B_{n-1}   & 0
  \end{pmatrix} .
\end{equation*}
Observe that $\mathcal D$ and $\mathcal B$ are self-adjoint, and, additionally, $\mathcal D$ is positive semi-definite. Furthermore, let
\begin{equation*}
    \widetilde{\bm x} =     
    \begin{pmatrix} 
      x_1 \\
      - x_2\\
      \vdots \\
      (-1)^{n-1} \, x_n
  \end{pmatrix} 
  \  \text{for} \ 
    \bm x =     
    \begin{pmatrix} 
      x_1 \\
      x_2\\
      \vdots \\
      x_n
  \end{pmatrix} 
  \quad \text{and} \quad
    \widetilde{\mathcal D} =     
    \begin{pmatrix} 
      A_1 &   &   & \\
       & - A_2  &  &  \\
          & & \ddots \\
          &       &  & (-1)^{n-1} \, A_n
  \end{pmatrix} .
\end{equation*}
These notations are used in the following analysis. 

We start with the analysis of the uniqueness of a solution to \cref{Mxb_new}.
\begin{lemma}\label{le:uniqueness}
$\ker \mathcal{A} = \ker \mathcal{D} \cap \ker \mathcal{B}$.
\end{lemma}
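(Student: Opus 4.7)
The plan is to observe that the operator decomposes as $\mathcal{A} = \widetilde{\mathcal{D}} + \mathcal{B}$, where $\widetilde{\mathcal{D}}$ has the same nullspace as $\mathcal{D}$ (because $\widetilde{\mathcal{D}}\bm{x}$ differs from $\mathcal{D}\bm{x}$ only by signs in each block, so $A_i x_i = 0$ for every $i$ iff $(-1)^{i-1}A_i x_i = 0$ for every $i$). With this decomposition in hand, the inclusion $\ker\mathcal{D}\cap\ker\mathcal{B}\subseteq\ker\mathcal{A}$ is immediate.

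For the reverse inclusion, the idea is to extract the quadratic form $\sum_{i=1}^n \dual{A_i x_i}{x_i}$ from $\mathcal{A}\bm{x}$ by testing against the sign-flipped vector $\widetilde{\bm{x}}$. Concretely, I would compute
\[
  \dual{\mathcal{A}\bm{x}}{\widetilde{\bm{x}}} = \dual{\widetilde{\mathcal{D}}\bm{x}}{\widetilde{\bm{x}}} + \dual{\mathcal{B}\bm{x}}{\widetilde{\bm{x}}}.
\]
The diagonal contribution simplifies to $\sum_i \dual{A_i x_i}{x_i}$ because the sign factors cancel in pairs $(-1)^{i-1}\cdot(-1)^{i-1} = 1$. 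The off-diagonal contribution vanishes: the $B_i'$ term above the diagonal contributes $(-1)^{i-1}\dual{B_i x_i}{x_{i+1}}$ (using adjointness), while the $B_i$ term below contributes $(-1)^{i}\dual{B_i x_i}{x_{i+1}}$, and these cancel for each $i$.

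Now suppose $\bm{x}\in\ker\mathcal{A}$. The above identity gives $\sum_i \dual{A_i x_i}{x_i}=0$, and since every $A_i$ is positive semi-definite each summand vanishes individually. The main technical step is then to pass from $\dual{A_i x_i}{x_i}=0$ to $A_i x_i = 0$; this follows from the Cauchy--Schwarz inequality for the semi-inner product induced by a self-adjoint positive semi-definite operator, namely $|\dual{A_i x_i}{y}|^2 \leq \dual{A_i x_i}{x_i}\dual{A_i y}{y}$ for all $y\in X_i$, which presumably is one of the auxiliary results collected in \cref{sec:app}. This yields $\mathcal{D}\bm{x}=0$, hence also $\widetilde{\mathcal{D}}\bm{x}=0$, and finally $\mathcal{B}\bm{x}=\mathcal{A}\bm{x}-\widetilde{\mathcal{D}}\bm{x}=0$, establishing $\bm{x}\in\ker\mathcal{D}\cap\ker\mathcal{B}$.

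The main obstacle is the bookkeeping of signs in the computation of $\dual{\mathcal{A}\bm{x}}{\widetilde{\bm{x}}}$ and, conceptually, the appeal to the Cauchy--Schwarz-type inequality that converts a zero of the quadratic form into a zero of the operator; everything else is essentially a direct algebraic manipulation enabled by the decomposition $\mathcal{A}=\widetilde{\mathcal{D}}+\mathcal{B}$.
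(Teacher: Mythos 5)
Your proposal is correct and follows essentially the same route as the paper's proof: decompose $\mathcal{A}=\widetilde{\mathcal{D}}+\mathcal{B}$, test against the sign-flipped vector $\widetilde{\bm{x}}$ to isolate $\langle\mathcal{D}\bm{x},\bm{x}\rangle$, use positive semi-definiteness (via the Cauchy--Schwarz inequality for the induced semi-inner product, which the paper invokes implicitly rather than citing from the appendix) to conclude $\mathcal{D}\bm{x}=0$, and then recover $\mathcal{B}\bm{x}=0$ by subtraction. The only cosmetic difference is that you carry out the sign bookkeeping componentwise where the paper states the cancellation at the block-operator level.
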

\begin{proof}
With the notations introduced above we have for all $\bm x \in \bm X$:
\[
  \langle \mathcal A \bm x, \widetilde{\bm x} \rangle
    = \langle (\widetilde{\mathcal D} + \mathcal B) \bm x, \widetilde{\bm x} \rangle
    =  \langle \mathcal D \bm x, \bm x \rangle 
       + \langle \mathcal B \bm x, \widetilde{\bm x} \rangle
    =  \langle \mathcal D \bm x, \bm x \rangle .
\]
Therefore, if $\bm x \in \ker \mathcal{A}$, then
\[
  \langle \mathcal{D} \bm{x},\bm{x} \rangle = \langle \mathcal{A} \bm{x},\widetilde{\bm{x}}\rangle = 0,
\]
which implies that $x \in \ker \mathcal{D}$, since $\mathcal{D}$ is self-adjoint and positive semi-definite. Furthermore, since $\ker \widetilde{\mathcal D} = \ker \mathcal D$, it follows that $\mathcal B \bm x = \mathcal A \bm x - \widetilde{\mathcal D} \bm x = 0$. This concludes the proof of $\ker \mathcal{A} \subset \ker \mathcal{D} \cap \ker \mathcal{B}$. 

On the other hand, if $\bm x \in \ker \mathcal{D} \cap \ker \mathcal{B}$, then $\bm x \in \ker \widetilde{\mathcal D}$ and, consequently, $\mathcal A \bm x = \widetilde{\mathcal D} \bm x + \mathcal B \bm x = 0$, which shows that $\ker \mathcal{D} \cap \ker \mathcal{B} \subset \ker \mathcal{A}$.
\end{proof}

The next theorem deals with the well-posedness of \cref{Mxb_new}.
\begin{theorem} \label{mainresult_new}
If there are positive constants $\underline{c}$ and $\overline{c}$ such that
\begin{equation} \label{Mc_new}
   \underline{c} \, \|\bm x\|_{\bm X} \le \|\mathcal{A} \bm x\|_{\bm X'} \le \overline{c} \, \|\bm x\|_{\bm X}
   \quad \text{for all} \ \bm x \in \bm X,
\end{equation}
then
\begin{equation} \label{Mgamma_new}
   \underline{\gamma} \, \|\bm x\|_{\bm X}^2 \le \langle \mathcal{D} \bm x,\bm x\rangle + \|\mathcal{B} \bm x\|_{\bm X'}^2 \le \overline{\gamma} \, \|\bm x\|_{\bm X}^2
   \quad \text{for all} \ \bm x \in \bm X,
\end{equation}
with positive constants $\underline{\gamma}$ and $\overline{\gamma}$ which depend only on $\underline{c}$ and $\overline{c}$.
Vice versa, if there are positive constants $\underline{\gamma}$ and $\overline{\gamma}$ such that \cref{Mgamma_new} holds, then \cref{Mc_new} holds with positive constants $\underline{c}$ and $\overline{c}$ which depend only on $\underline{\gamma}$ and $\overline{\gamma}$.
\end{theorem}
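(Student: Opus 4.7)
The plan is to exploit the two identities already used in the proof of \cref{le:uniqueness}, namely
\[
  \langle \mathcal{A}\bm{x},\widetilde{\bm{x}}\rangle = \langle \mathcal{D}\bm{x},\bm{x}\rangle
  \quad \text{and} \quad
  \mathcal{B}\bm{x} = \mathcal{A}\bm{x} - \widetilde{\mathcal{D}}\bm{x},
\]
together with the fact that $\|\widetilde{\bm x}\|_{\bm X} = \|\bm x\|_{\bm X}$ and $\|\widetilde{\mathcal{D}}\bm{x}\|_{\bm X'} = \|\mathcal{D}\bm{x}\|_{\bm X'}$. The one auxiliary tool I expect to invoke (presumably available from \cref{sec:app}) is the standard inequality for a bounded self-adjoint positive semi-definite operator,
\[
  \|\mathcal{D}\bm{x}\|_{\bm X'}^2 \;\le\; \|\mathcal{D}\|\, \langle \mathcal{D}\bm{x},\bm{x}\rangle,
\]
which follows from Cauchy--Schwarz applied to the semi-inner product $\langle \mathcal{D}\cdot,\cdot\rangle$.

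For the direction \cref{Mc_new} $\Rightarrow$ \cref{Mgamma_new}, I would first derive that $\|\mathcal{D}\| \le \overline{c}$ by taking the supremum over unit vectors in the identity $\langle \mathcal{D}\bm{x},\bm{x}\rangle = \langle \mathcal{A}\bm{x},\widetilde{\bm{x}}\rangle \le \overline{c}\|\bm{x}\|^2$. The upper bound in \cref{Mgamma_new} then follows at once from this identity plus $\|\mathcal{B}\bm{x}\| \le \|\mathcal{A}\bm{x}\| + \|\mathcal{D}\bm{x}\|$ and the PSD inequality. For the lower bound in \cref{Mgamma_new}, I would start from
\[
  \underline{c}^2 \|\bm{x}\|^2 \le \|\mathcal{A}\bm{x}\|^2 = \|\widetilde{\mathcal{D}}\bm{x} + \mathcal{B}\bm{x}\|^2 \le 2\|\mathcal{D}\bm{x}\|^2 + 2\|\mathcal{B}\bm{x}\|^2,
\]
and estimate $\|\mathcal{D}\bm{x}\|^2 \le \overline{c}\,\langle \mathcal{D}\bm{x},\bm{x}\rangle$ via the PSD inequality, yielding $\underline{\gamma}$ explicitly in terms of $\underline{c}$ and $\overline{c}$.

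The reverse direction \cref{Mgamma_new} $\Rightarrow$ \cref{Mc_new} is where the real work lies. The upper bound $\|\mathcal{A}\bm{x}\| \le \|\mathcal{D}\bm{x}\| + \|\mathcal{B}\bm{x}\|$ is immediate, using $\|\mathcal{D}\| \le \overline{\gamma}$ (obtained the same way as before) and the PSD inequality to control $\|\mathcal{D}\bm{x}\|$. For the lower bound, the strategy is to insert $\langle \mathcal{D}\bm{x},\bm{x}\rangle = \langle \mathcal{A}\bm{x},\widetilde{\bm{x}}\rangle \le \|\mathcal{A}\bm{x}\|\|\bm{x}\|$ and $\|\mathcal{B}\bm{x}\|^2 \le 2\|\mathcal{A}\bm{x}\|^2 + 2\|\mathcal{D}\bm{x}\|^2$ with $\|\mathcal{D}\bm{x}\|^2 \le \overline{\gamma}\|\mathcal{A}\bm{x}\|\|\bm{x}\|$ into \cref{Mgamma_new}, which after dividing by $\|\bm{x}\|^2$ produces a quadratic inequality of the form
\[
  2\,u^2 + (1+2\overline{\gamma})\,u - \underline{\gamma} \ge 0, \qquad u = \|\mathcal{A}\bm{x}\|_{\bm X'}/\|\bm{x}\|_{\bm X}.
\]
Its positive root furnishes the required $\underline{c}$ as an explicit function of $\underline{\gamma}$ and $\overline{\gamma}$.

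The main obstacle is this last step: the appearance of $\|\mathcal{D}\bm{x}\|$, which cannot be controlled by $\langle \mathcal{D}\bm{x},\bm{x}\rangle$ alone without the PSD interpolation inequality, makes the chain of estimates nonlinear in $\|\mathcal{A}\bm{x}\|$ and forces the quadratic argument. Everything else is careful bookkeeping with the triangle inequality and the identities extracted from \cref{le:uniqueness}.
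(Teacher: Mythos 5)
Your proposal is correct, and for the one genuinely hard step it takes a different route from the paper. The first implication and the upper bound in the reverse implication proceed exactly as in the paper: the identities $\langle \mathcal{A}\bm{x},\widetilde{\bm{x}}\rangle = \langle \mathcal{D}\bm{x},\bm{x}\rangle$ and $\mathcal{B} = \mathcal{A}-\widetilde{\mathcal{D}}$, the triangle inequality, and the interpolation bound $\|\mathcal{D}\bm{x}\|_{\bm X'}^2 \le \|\mathcal{D}\|\,\langle\mathcal{D}\bm{x},\bm{x}\rangle$ (which the paper also proves inline via Cauchy--Schwarz for the semi-inner product; it is not one of the appendix lemmas, but your derivation of it is the same). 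The difference lies in the lower bound of \cref{Mc_new} from \cref{Mgamma_new}. The paper works with two separate lower bounds for $\|\mathcal{A}\bm{x}\|_{\bm X'}$ --- the reverse triangle inequality $\|\mathcal{A}\bm{x}\| \ge \bigl|\|\mathcal{B}\bm{x}\|-\|\mathcal{D}\bm{x}\|\bigr|$ and the Rayleigh-type bound $\|\mathcal{A}\bm{x}\| \ge \overline{\gamma}^{-1}\|\mathcal{D}\bm{x}\|^2/\|\bm{x}\|$ --- normalizes $(\|\mathcal{D}\bm{x}\|,\|\mathcal{B}\bm{x}\|)$ to a point $(\xi,\eta)$ on the unit quarter-circle, and invokes the numerical fact $\min\{\max(|y-x|,x^2): x,y\ge 0,\ x^2+y^2=1\}\ge 0.29$ to conclude. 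You instead bound \emph{both} terms $\langle\mathcal{D}\bm{x},\bm{x}\rangle$ and $\|\mathcal{B}\bm{x}\|^2$ from above by expressions in $\|\mathcal{A}\bm{x}\|$ and $\|\bm{x}\|$, arriving at the single quadratic inequality $2u^2+(1+2\overline{\gamma})u-\underline{\gamma}\ge 0$ for $u=\|\mathcal{A}\bm{x}\|_{\bm X'}/\|\bm{x}\|_{\bm X}$; since the left-hand side is increasing in $u\ge 0$ and negative at $u=0$, the positive root gives an explicit $\underline{c}(\underline{\gamma},\overline{\gamma})>0$. I checked each link in that chain and it is sound. Your argument is arguably cleaner --- it avoids the case analysis hidden in the max and the ad hoc constant $0.29$ --- at the price of a slightly less transparent (root-of-a-quadratic) expression for $\underline{c}$; both yield constants depending only on $\underline{\gamma}$ and $\overline{\gamma}$, which is all the theorem requires.
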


\begin{proof}
First we show that \cref{Mc_new} implies \cref{Mgamma_new}. 
For the estimate from above in \cref{Mgamma_new} observe that
\[
  \langle \mathcal{D} \bm x, \bm x \rangle 
  = \langle \mathcal A \bm x, \widetilde{\bm x} \rangle 
  \le \|\mathcal A \bm x\|_{\bm X'} \, \|\widetilde{\bm x}\|_{\bm X} 
  \le \overline{c} \, \|\bm x\|_{\bm X} \, \|\widetilde{\bm x}\|_{\bm X} =  \overline{c} \, \|\bm x\|_{\bm X}^2.
\]
In order to estimate $\|\mathcal B \bm x\|_{\bm X'}$ we use 
\[
  \|\mathcal B \bm x\|_{\bm X'}
  = \|(\mathcal A - \widetilde{\mathcal D}) \bm x\|_{\bm X'} 
  \le \|\mathcal A \bm x\|_{\bm X'} + \|\widetilde{\mathcal D} \bm x\|_{\bm X'} 
   =  \|\mathcal A \bm x\|_{\bm X'} + \|\mathcal D \bm x\|_{\bm X'} .
\]
Since
\[
  \langle \mathcal{D} \bm x,\bm y \rangle^2
    \le \langle \mathcal{D} \bm x, \bm x\rangle \, \langle \mathcal{D} \bm y, \bm y\rangle
    \le \overline{c}^2 \, \|\bm x\|_{\bm X}^2 \, \|\bm y\|_{\bm X}^2,
\]
it follows that
\[
  \|\mathcal D \bm x\|_{\bm X'} 
     =  \sup_{\bm 0 \neq \bm y \in \bm X} \frac{\langle \mathcal{D} \bm x,\bm y \rangle}{\|\bm y\|_{\bm X}}
  \le \overline{c} \, \|\bm x\|_{\bm X},
\]
which allows to complete the estimate of  $\|\mathcal B \bm x\|_{\bm X'}$:
\[
  \|\mathcal B \bm x\|_{\bm X'}
  \le \|\mathcal A \bm x\|_{\bm X'} + \|\mathcal D \bm x\|_{\bm X'}
  \le 2 \, \overline{c} \, \|\bm x\|_{\bm X}.
\]
The estimates of $\langle \mathcal{D} \bm x, \bm x \rangle$ and $\|\mathcal B \bm x\|_{\bm X'}$ lead directly to the estimate from above in \cref{Mgamma_new} with $\overline{\gamma} = \overline{c} + 4 \, \overline{c}^2$.

For showing the estimate from below in \cref{Mgamma_new} we start with the following argument:
\[
  \langle \mathcal{D} \bm x,\bm y \rangle^2
    \le \langle \mathcal{D} \bm x, \bm x\rangle \, \langle \mathcal{D} \bm y, \bm y\rangle
    \le \langle \mathcal{D} \bm x, \bm x\rangle \, \| \mathcal{D} \bm y\|_{\bm X'} \| \bm y\|_{\bm X} 
    \le \overline{c} \, \langle \mathcal{D} \bm x, \bm x\rangle \, \|\bm y\|_{\bm X}^2,
\]
which implies
\[
  \|\mathcal{D} \bm x\|^2 
     =  \sup_{\bm 0 \neq \bm y \in \bm X} \frac{\langle \mathcal{D} \bm x,\bm y \rangle^2}{\|\bm y\|_{\bm X}^2}
    \le \overline{c} \, \langle \mathcal{D} \bm x, \bm x\rangle.
\]
Therefore,
\begin{align*}
  \underline{c} \, \|\bm x\|_{\bm X} 
    & \le \|\mathcal A \bm x\|_{\bm X'} 
       =  \|(\widetilde{\mathcal D} + \mathcal B) \bm x\|_{\bm X'}
      \le \|\widetilde{\mathcal D} \bm x\|_{\bm X'} + \|\mathcal B \bm x\|_{\bm X'}
       =  \|\mathcal D \bm x\|_{\bm X'} + \|\mathcal B \bm x\|_{\bm X'} \\
    & \le \overline{c}^{1/2} \, \langle \mathcal{D} \bm x, \bm x\rangle^{1/2} +  \|\mathcal B \bm x\|_{\bm X'}
      \le (\overline{c} + 1)^{1/2} \, \left(\langle \mathcal{D} \bm x, \bm x\rangle + \|\mathcal B \bm x\|_{\bm X'}^2 \right)^{1/2},
\end{align*}
from which the estimate from below in \cref{Mgamma_new} follows for $\underline{\gamma} = \underline{c}^2/(\overline{c}+1)$.

It remains to show that \cref{Mgamma_new} implies \cref{Mc_new}. For the estimate from above in \cref{Mc_new} we again use the triangle inequality and obtain
\[
  \|\mathcal A \bm x\|_{\bm X'} 
      \le \|\mathcal D \bm x\|_{\bm X'} + \|\mathcal B \bm x\|_{\bm X'} ,
\]
see above. Since
\[
  \langle \mathcal{D} \bm x,\bm y \rangle^2
    \le \langle \mathcal{D} \bm x, \bm x\rangle \, \langle \mathcal{D} \bm y, \bm y\rangle
    \le \overline{\gamma} \, \langle \mathcal{D} \bm x, \bm x\rangle \, \|\bm y\|_{\bm X}^2,
\]
it follows that
\begin{equation} \label{ANormRay_new}
  \|\mathcal D \bm x\|_{\bm X'}^2  
     =  \sup_{\bm 0 \neq \bm y \in \bm X} \frac{\langle \mathcal{D} \bm x,\bm y \rangle^2}{\|\bm y\|_{\bm X}^2}
    \le \overline{\gamma} \, \langle \mathcal{D} \bm x, \bm x\rangle,
\end{equation}
which allows to complete the estimate of  $\|\mathcal A \bm x\|_{\bm X'}$:
\begin{align*}
  \|\mathcal A \bm x\|_{\bm X'} 
      & \le \overline{\gamma}^{1/2} \, \langle \mathcal{D} \bm x, \bm x\rangle^{1/2} + \|\mathcal B \bm x\|_{\bm X'} 
      \le (\overline{\gamma} + 1)^{1/2} \, \left( \langle \mathcal{D} \bm x, \bm x\rangle + \|\mathcal B \bm x\|_{\bm X'}^2\right)^{1/2} \\
      & \le (\overline{\gamma} + 1)^{1/2}  \, \overline{\gamma}^{1/2} \, \|\bm x\|_{\bm X} .
\end{align*}
Then the estimate from above in \cref{Mc_new} follows for $\overline{c} = \big[\overline{\gamma}\, (\overline{\gamma} + 1)\big]^{1/2}$.

For the estimate from below in \cref{Mc_new}, we start with the following two estimates:
\begin{equation} \label{Below1_new}
  \|\mathcal A \bm x\|_{\bm X'} 
    \ge \big| \|\mathcal B \bm x\|_{\bm X'} - \|\mathcal D \bm x\|_{\bm X'} \big|
\end{equation}
and
\begin{equation} \label{Below2_new}
  \|\mathcal{A} \bm x\|_{\bm X'} 
   \ge \frac{\langle \mathcal{A} \bm x,\widetilde{\bm x}\rangle}{\|\widetilde{\bm x}\|_{\bm X}}
    =  \frac{\langle \mathcal{D} \bm x,\bm x \rangle}{\|\bm x\|_{\bm X}}
   \ge \frac{1}{\overline{\gamma}} \, \frac{\| \mathcal D \bm x\|_{\bm X'}^2}{\|\bm x\|_{\bm X}}
   \quad \text{for} \ \bm x \neq \bm 0.
\end{equation}
The first estimate follows from the triangle inequality. For the second estimate we used \cref{ANormRay_new}. 
Next we need to estimate $\|\bm x\|_{\bm X}$ from above in terms of $\|\mathcal D \bm x\|_{\bm X'}$ and $\|\mathcal B \bm x\|_{\bm X'}$:
From \cref{Mgamma_new} and
\[
  \langle \mathcal D \bm x,\bm x\rangle \le \frac{1}{2 \, \varepsilon} \, \|\mathcal D \bm x\|_{\bm X'}^2 + \frac{\varepsilon}{2} \, \|\bm x\|_{\bm X}^2
\]
it follows that
\[
  \underline{\gamma} \, \|\bm x\|_{\bm X}^2 
    \le \langle \mathcal D \bm x,\bm x\rangle + \|\mathcal B \bm x\|_{\bm X'}^2 
    \le \frac{1}{2 \, \varepsilon} \, \|\mathcal D \bm x\|_{\bm X'}^2 + \frac{\varepsilon}{2} \, \|\bm x\|_{\bm X}^2
    + \|\mathcal B \bm x\|_{\bm X'}^2.
\]
For $\varepsilon = \underline{\gamma}$ we obtain
\[
  \frac{\underline{\gamma}}{2} \, \| \bm x\|_{\bm X}^2 
    \le \frac{1}{2 \underline{\gamma}} \, \|\mathcal D \bm x\|_{\bm X'}^2 + \|\mathcal B \bm x\|_{\bm X'}^2
    \le \max\left(\frac{1}{2 \underline{\gamma}},1\right) \, \left(\|\mathcal D \bm x\|_{\bm X'}^2 + \|\mathcal B \bm x\|_{\bm X'}^2\right),
\]
which implies
\[
  \delta \, \|\bm x\|_{\bm X}^2 
    \le \|\mathcal D \bm x\|_{\bm X'}^2 + \|\mathcal B \bm x\|_{\bm X'}^2
  \quad \text{with} \quad
  \delta =  \min \left(\underline{\gamma}^2, \underline{\gamma}/2 \right) .
\]
With this estimate we obtain from the estimates \cref{Below1_new} and \cref{Below2_new} for $\bm x \neq 0$:
\[
  \|\mathcal{A} \bm x\|_{\bm X'} 
   \ge \big| \|\mathcal B \bm x\|_{\bm X'} - \|\mathcal D \bm x\|_{\bm X'} \big|
  = |\eta - \xi| \, (\|\mathcal D \bm x\|_{\bm X'}^2 + \|\mathcal B \bm x\|_{\bm X'}^2)^{1/2}
  \ge \delta^{1/2} \,  |\eta - \xi| \, \|\bm x\|_{\bm X} 
\]
and
\[
  \|\mathcal{A} \bm x\|_{\bm X'} 
   \ge \frac{1}{\overline{\gamma}} \, \frac{\| \mathcal D \bm x\|_{\bm X'}^2}{\|\bm x\|_{\bm X}}
  = \frac{1}{\overline{\gamma}} \, \xi^2 \, \frac{\|\mathcal D \bm x\|_{\bm X'}^2 + \|\mathcal B \bm x\|_{\bm X'}^2}{\|\bm x\|_{\bm X}}
  \ge (\delta/\overline{\gamma}) \, \xi^2 \, \|\bm x\|_{\bm X}
\]
with
\[
  \xi = \frac{\| \mathcal D \bm x\|_{\bm X'}}{(\|\mathcal D \bm x\|_{\bm X'}^2 + \|\mathcal B \bm x\|_{\bm X'}^2)^{1/2}}
  \quad \text{and} \quad
  \eta  = \frac{\|\mathcal B \bm x\|_{\bm X'}}{(\|\mathcal D \bm x\|_{\bm X'}^2 + \|\mathcal B \bm x\|_{\bm X'}^2)^{1/2}} .
\]
Note that $\xi$ and $\eta$ are well-defined for $x\notin \ker\mathcal{A}$ by \cref{le:uniqueness}.

By combining these two estimates we obtain
\begin{align*}
  \|\mathcal{A} \bm x\|_{\bm X'} 
    & \ge (\delta/\overline{\gamma}) 
          \, \max\left(|\eta - \xi|, \xi^2\right)  \, \|\bm x\|_{\bm X} ,
\end{align*}
where we used that
\[
  \delta/\overline{\gamma}
    = \begin{cases}
        \underline{\gamma}^2/\overline{\gamma} \le \underline{\gamma} = \sqrt{\delta} & \quad \text{if} \ \underline{\gamma} \le 1/2, \\
        \underline{\gamma}/(2\overline{\gamma})  \le 1/2 \le \sqrt{\underline{\gamma}/2}  = \sqrt{\delta}& \quad \text{if} \ \underline{\gamma} \ge 1/2. \\
      \end{cases}
\]
Observe that $\xi^2 + \eta^2 = 1$ and
\[
  \varphi(\xi,\eta) \ge \min \{\varphi(x,y) \colon x,y \ge 0,\ x^2 + y^2 =1 \} \ge 0.29
\]
with
$
  \varphi(x,y) = \max \left(|y - x|, x^2\right).
$
Therefore
\[
  \|\mathcal{A} \bm x\|_{\bm X'}   \ge \underline{c} \, \|\bm x\|_{\bm X}
  \quad \text{with} \quad
  \underline{c} = 0.29 \, (\delta/\overline{\gamma})
    = 0.29 \, \min \left(\underline{\gamma}^2, \underline{\gamma}/2 \right) / \overline{\gamma}. 
\]
which concludes the proof.
\end{proof}

Assume that we have self-adjoint and positive definite bounded linear operators $P_i \colon X_i \longrightarrow X_i'$ inducing inner products on $X_i$ via
\[
 \inner{x_i}{y_i}_{P_i}=\dual{P_i x_i}{y_i} \quad\text{for all} \  x_i, y_i  \in X_i.
\]
Then the block diagonal operator $\mathcal P \colon \bm X \longrightarrow \bm X'$, given by
\begin{equation*}
    \mathcal{P} =     
    \begin{pmatrix} 
      P_1 &      &        &  \\
          & P_2  &        &  \\
          &      & \ddots &  \\[1ex]
          &      &        & P_n
  \end{pmatrix} ,
\end{equation*}
defines an inner product on $\bm X$, called the $\mathcal{P}$-inner product, by virtue of
\[
\inner{\bm x}{\bm y}_\mathcal{P} = \dual{\mathcal{P} \bm x}{\bm y}\quad\text{for all} \  \bm x, \bm y  \in \bm X.
\]
The associated equivalent norm on $\bm X$, called the $\mathcal{P}$-norm, will be denoted by $\norm{\cdot}_\mathcal{P}$.

With this notation, we want to express \cref{Mgamma_new} in a more convenient form. For the Hilbert space $\bm X$ equipped with the $\mathcal{P}$-norm it follows by \cref{le:appendix1} that
\[
\norm{\mathcal{B}\bm x}^2_{\bm X'}
=\sup_{0\neq \bm y\in \bm X}\frac{\dual{\mathcal{B}\bm x}{\bm y}^2}{\dual{\mathcal{P}\bm y}{\bm y}} 
= \dual{\mathcal{B}\mathcal{P}^{-1}\mathcal{B}\bm x}{\bm x}\quad\text{for all}\ \bm x\in \bm X.
\]
Therefore, the condition \cref{Mgamma_new} of \cref{mainresult_new} can be written in the short form
\begin{equation}\label{eq:spectralequivalence}
  \mathcal{P} \sim \mathcal{D} + \mathcal{B} \mathcal{P}^{-1}  \mathcal{B},
\end{equation}
using the following notation:

\emph{Notation} 2: Let $M,N:X\longrightarrow X'$ be two self-adjoint bounded linear operators. Then the following hold:
\begin{enumerate}
 \item $M\leq N$ if and only if
 \[
  \dual{Mx}{x}\leq \dual{Nx}{x}\quad\text{for all} \ x\in X.
 \]
 \item $M\lesssim N$ if and only if there is a constant $c\geq 0$ such that $M\leq cN$.
 \item $M\sim N$ if and only if $M\lesssim N$ and $N\lesssim M$. In this case we call $M$ and $N$ spectrally equivalent.
\end{enumerate}
If the operators $M$ and $N$ depend on some parameters (like a regularization parameter $\alpha$ or a discretization parameter $h$), then we additionally assume that the involved constants are independent of those parameters.

With this notation, \cref{mainresult_new} offers a result on robust preconditioning of \cref{Mxb_new}: For the Hilbert space $\bm X$ equipped with the $\mathcal{P}$-norm, given by a block diagonal operator $\mathcal{P}:\bm X\longrightarrow \bm X'$ satisfying the relation \cref{eq:spectralequivalence}, there exist parameter-independent constants $\underline{c}$, $\overline{c}$ such that \cref{Mc_new} holds. Since
\[
\norm{\mathcal{A} \bm x}^2_{\bm X'}  =
\dual{\mathcal{A}\mathcal{P}^{-1}\mathcal{A}\bm x}{\bm x} =
\dual{\mathcal{A}\bm x}{\mathcal{P}^{-1}\mathcal{A}\bm x} = 
\norm{\mathcal{P}^{-1}\mathcal{A}\bm x}^2_{\mathcal{P}},
\]
well-posedness \cref{Mc_new} can be written as
\begin{equation*}
  \underline{c} \, \|\bm x\|_{\mathcal{P}}
   \le \|\mathcal{P}^{-1}\mathcal{A} \bm x\|_{\mathcal{P}}
   \le \overline{c} \, \|\bm x\|_{\mathcal{P}}
   \quad \text{for all} \ \bm x  \in \bm X .
\end{equation*}
Consequently, it follows for the condition number
\[
 \kappa(\mathcal{P}^{-1}\mathcal{A}) = \|\mathcal{P}^{-1}\mathcal{A}\|_{L(\bm X,\bm X)} \|(\mathcal{P}^{-1}\mathcal{A})^{-1}\|_{L(\bm X,\bm X)}  \leq \frac{\overline{c}}{\underline{c}}. 
\]
Therefore, the task of finding a good preconditioner $\mathcal{P}:\bm X\longrightarrow \bm X'$ for the system \cref{Mxb_new} translates to choosing inner products $\inner{\cdot}{\cdot}_{P_i}$ on the Hilbert spaces $X_i$ such that the condition \cref{eq:spectralequivalence} is satisfied.

We now illustrate \cref{eq:spectralequivalence} for the three interesting cases $n\in\{2,3,4\}$.
\subsection{The case $n=2$}
Let
\[
  \mathcal{A} = \begin{pmatrix} A_1 & B_1' \\ B_1 & -A_2 \end{pmatrix}, \quad
  \mathcal{D} = \begin{pmatrix} A_1 & 0 \\ 0 & A_2 \end{pmatrix}, \quad
  \mathcal{B} = \begin{pmatrix} 0 & B_1' \\ B_1 & 0 \end{pmatrix}, \quad
  \mathcal{P} = \begin{pmatrix} P_1 & 0 \\ 0 & P_2 \end{pmatrix}.
\]
Then
\[
   \mathcal{B} \mathcal{P}^{-1} \mathcal{B}
   = \begin{pmatrix} 
       B_1' P_2^{-1} B_1 & 0 \\
       0 & B_1 P_1^{-1} B_1' 
     \end{pmatrix}
\]
and the spectral relation
\[
  \mathcal{P} \sim \mathcal{D} + \mathcal{B} \mathcal{P}^{-1} \mathcal{B}
\]
is equivalent to
\[
  P_1 \sim A_1 + B_1' P_2^{-1} B_1 
  \quad  \text{and} \quad
  P_2 \sim A_2 + B_1 P_1^{-1} B_1'.
\]
Thus, we recover the result from \cite{Zul11}.
\subsection{The case $n=3$}
Let
\[
  \mathcal{A} = \begin{pmatrix} 
                  A_1 & B_1' & 0 \\ 
                  B_1 & -A_2 & B_2' \\
                  0 & B_2 & A_3
                \end{pmatrix},
\]
\[
  \mathcal{D} = \begin{pmatrix} 
                  A_1 & 0 & 0 \\ 
                  0 & A_2 & 0 \\
                  0 & 0 & A_3
                \end{pmatrix}, \quad
  \mathcal{B} = \begin{pmatrix} 
                  0 & B_1' & 0 \\ 
                  B_1 & 0 & B_2' \\
                  0 & B_2 & 0
                \end{pmatrix}, \quad
  \mathcal{P} = \begin{pmatrix} 
                  P_1 & 0 & 0 \\ 
                  0 & P_2 & 0 \\
                  0 & 0 & P_3
                \end{pmatrix}.
\]
Then
\[
  \mathcal{B} \mathcal{P}^{-1} \mathcal{B} 
    = \begin{pmatrix} 
        B_1' P_2^{-1} B_1 & 0 & B_1' P_2^{-1} B_2' \\
        0 & B_1 P_1^{-1} B_1' + B_2' P_3^{-1} B_2 & 0\\
        B_2 P_2^{-1} B_1 & 0 & B_2 P_2^{-1} B_2'
     \end{pmatrix}
\]
and the spectral relation
\[
  \mathcal{P} \sim \mathcal{D} + \mathcal{B} \mathcal{P}^{-1} \mathcal{B}
\]
is equivalent to
\begin{equation*}
  \begin{pmatrix} 
    P_1 &  0 \\ 
    0 & P_3
  \end{pmatrix}
  \sim 
  \begin{pmatrix} 
     A_1 + B_1' P_2^{-1} B_1 & B_1' P_2^{-1} B_2' \\
     B_2 P_2^{-1} B_1 & A_3 + B_2 P_2^{-1} B_2' 
  \end{pmatrix}
\end{equation*}
and
\begin{equation*}
 P_2 \sim A_2 + B_1 P_1^{-1} B_1' + B_2' P_3^{-1} B_2.
\end{equation*}
\subsection{The case $n=4$}
Let
\[
  \mathcal{A} = \begin{pmatrix} 
                  A_1 & B_1' & 0 & 0 \\ 
                  B_1 & -A_2 & B_2' & 0 \\
                  0 & B_2 & A_3 & B_3' \\
                  0 & 0 & B_3 & -A_4
                \end{pmatrix},
\]
\[
  \mathcal{D} = \begin{pmatrix} 
                  A_1 & 0 & 0 & 0 \\ 
                  0 & A_2 & 0 & 0 \\
                  0 & 0 & A_3 & 0 \\
                  0 & 0 & 0 & A_4
                \end{pmatrix}, \,
  \mathcal{B} = \begin{pmatrix} 
                  0 & B_1' & 0 & 0 \\ 
                  B_1 & 0 & B_2' & 0 \\
                  0 & B_2 & 0 & B_3' \\
                  0 & 0 & B_3 & 0
                \end{pmatrix}, \,
  \mathcal{P} = \begin{pmatrix} 
                  P_1 & 0 & 0 & 0 \\ 
                  0 & P_2 & 0 & 0 \\
                  0 & 0 & P_3 & 0 \\
                  0 & 0 & 0 & P_4
                \end{pmatrix}.
\]
Then
\begin{multline*}
  \mathcal{B} \mathcal{P}^{-1} \mathcal{B} \\
   = \begin{pmatrix} 
        B_1' P_2^{-1} B_1 & 0 & B_1' P_2^{-1} B_2' & 0 \\
        0 & B_1 P_1^{-1} B_1' + B_2' P_3^{-1} B_2 & 0 & B_2' P_3^{-1} B_3'\\
        B_2 P_2^{-1} B_1 & 0 & B_2 P_2^{-1} B_2' + B_3' P_4^{-1} B_3 & 0 \\
        0 & B_3 P_3^{-1} B_2 & 0 & B_3 P_3^{-1} B_3'
     \end{pmatrix}
\end{multline*}
and the spectral relation
\[
  \mathcal{P} \sim \mathcal{D} + \mathcal{B} \mathcal{P}^{-1} \mathcal{B}
\]
is equivalent to
\begin{equation}\label{eq:cond1}
  \begin{pmatrix} 
    P_1 &  0 \\ 
    0 & P_3
  \end{pmatrix}
  \sim 
  \begin{pmatrix} 
     A_1 + B_1' P_2^{-1} B_1 & B_1' P_2^{-1} B_2' \\
     B_2 P_2^{-1} B_1 & A_3 + B_2 P_2^{-1} B_2' + B_3' P_4^{-1} B_3
   \end{pmatrix}
\end{equation}
and
\begin{equation}\label{eq:cond2}
  \begin{pmatrix} 
    P_2 &  0 \\ 
    0 & P_4
  \end{pmatrix}
  \sim 
  \begin{pmatrix} 
     A_2 + B_1 P_1^{-1} B_1' + B_2' P_3^{-1} B_2 & B_2' P_3^{-1} B_3' \\
     B_3 P_3^{-1} B_2 & A_4 + B_3 P_3^{-1} B_3'
   \end{pmatrix}.
\end{equation}

\section{Application to optimal control problems}\label{sec:optimalcontrol}
We are now going to apply our theory to general abstract optimal control problems constrained by linear partial differential equations:

For given data $d$ and fixed $\alpha>0$, we consider the minimization problem of finding a state $y$ and control $u$ which minimize the functional 
 \begin{equation}\label{eq:minimizationfunctional}
  J:Y\times U\longrightarrow \mathbb{R},\quad J(y,u) = \frac{1}{2}\norm{Ty-d}^2_O+\frac{\alpha}{2}\norm{u}^2_U,
 \end{equation}
subject to the constraint
\begin{equation}\label{eq:stateequation}
 Ky + Cu = g.
\end{equation}
Here, $Y$ denotes the state space, $U$ is the control space, and $O$ is the observation space.
The bounded linear observation operator $T:Y\longrightarrow O$ in \cref{eq:minimizationfunctional} maps the state to the measurements.

The state equation \cref{eq:stateequation} is given in terms of the bounded linear operators
\[
 K: Y\longrightarrow M'\,\text{(state operator)}\quad\text{and}\quad C:U\longrightarrow M'\,\text{(control operator)}.
\]
Here, we assume that the test space $M$ is a product space of Hilbert spaces where the first space is the same function space as used for the control,
\[
M=U\times R.
\]
The components of $K$ will be denoted by the bounded linear operators
\[
K_U:Y\longrightarrow U',\quad K_R:Y\longrightarrow R',
\]
such that $Ky=(K_Uy,K_Ry)^\top$. Typically, the components of the state operator represent the differential expression and side conditions, such as boundary, and (or) initial conditions, respectively. For illustrative examples we refer to \cref{sec:examples}.

The crucial assumption of the control operator $C$ is that it acts only on the first line of the state equation, that is, our considered state equation \cref{eq:stateequation} has the particular form
\begin{equation}\label{eq:consideredstateequation}
\begin{pmatrix}K_U \\ K_R\end{pmatrix}y + \begin{pmatrix} \mathcal{I}_U \\ 0 \end{pmatrix} u = \begin{pmatrix}g_U\\g_R\end{pmatrix}.
\end{equation}
Here, we used the following notation:

\emph{Notation} 3: The inner product in a Hilbert space $X$ induces a self-adjoint and positive definite bounded linear operator $\mathcal{I}_X \colon X \longrightarrow X'$, given by
\[
  \dual{\mathcal{I}_X x}{y} = \inner{x}{y}_X
  \quad \text{for all} \ x, y  \in X,
\]
whose inverse is usually called the Riesz isomorphism associated to the Hilbert space $X$.

\begin{remark}
We stress that the following treatment does not exclude the trivial case $R=\{0\}$ which corresponds to full control distributed on $M$.
\end{remark}

The optimality system for the constrained optimization problem \cref{eq:minimizationfunctional,eq:consideredstateequation} reads as follows:

Find $(y,u,p_U,p_R)\in Y\times U\times U\times R$ such that
 \begin{equation}\label{eq:optoriginal}
\begin{pmatrix}
   T'\mathcal{I}_OT & 0 & K_U' & K_R' \\
   0 & \alpha \mathcal{I}_U & \mathcal{I}_U & 0 \\
   K_U & \mathcal{I}_U & 0 & 0 \\
   K_R & 0 & 0 & 0
 \end{pmatrix}
  \begin{pmatrix}
   y \\ u \\ p_U \\ p_R
  \end{pmatrix}
= \begin{pmatrix}
   \inner{d}{T\cdot}_O \\ 0 \\ g_U \\ g_R
  \end{pmatrix}
.
 \end{equation}
\begin{remark}
Note that $T'\mathcal{I}_OT:Y\longrightarrow Y'$ is given by $y\mapsto \inner{Ty}{T\cdot}_O$.
\end{remark}
After a reordering, the optimality can equivalently be written in tridiagonal form:

Find $(u,p_U,y,p_R)\in U\times U\times Y\times R$ such that
\begin{equation}\label{eq:optreordered}
 \begin{pmatrix}
  \alpha \mathcal{I}_U & \mathcal{I}_U & 0 & 0 \\
  \mathcal{I}_U & 0 & K_U & 0 \\
  0 & K_U' & T'\mathcal{I}_OT & K_R' \\
  0 & 0 & K_R & 0
 \end{pmatrix}
 \begin{pmatrix}
  u \\ p_U \\ y \\ p_R
 \end{pmatrix}
=
 \begin{pmatrix}
  0 \\ g_U \\ \inner{d}{T\cdot}_O \\ g_R
  \end{pmatrix}
.
\end{equation}
It is obvious that the spectral relations \cref{eq:cond1} and \cref{eq:cond2} strongly depend on the properties of the involved operators $K_U$, $K_R$ (and $T$). We are going to make the following assumptions:
\begin{enumerate}[label=(K\arabic*)]
\item \label{K1} The operator $K:Y\longrightarrow M'$, defined by $y\mapsto (K_Uy,K_Ry)^\top$, has closed range and is injective, or, equivalently, there exists a positive constant $c_K$ such that
\begin{equation}\label{eq:normequivalence2_new}
\norm{y}_Y  \leq c_K\norm{Ky}_{M'}=c_K\sqrt{\norm{K_Uy}^2_{U'} + \norm{K_Ry}^2_{R'}}
\quad \text{for all} \ y\in Y.
\end{equation}
\item \label{K2} The operator $K_R:Y\longrightarrow R'$ is surjective, or, equivalently, there exists a positive constant $c_R$ such that
\[
    \sup_{0 \neq y \in Y} \frac{\dual{K_R y}{r}}{\norm{y}_Y} \ge c_R \, \norm{r}_R
    \quad \text{for all} \ r\in R .
\]
\end{enumerate}
The \cref{K2} will also be considered in the stronger form:
\begin{enumerate}[label=(K\arabic*')]\addtocounter{enumi}{1}
\item \label{K2'} The operator $K_R|_{\ker K_U}:\ker K_U\longrightarrow R'$ is surjective, or, equivalently, there exists a positive constant $c_R$ such that
\[
    \sup_{0 \neq y \in \ker K_U} \frac{\dual{K_R y}{r}}{\norm{y}_Y} \ge c_R \, \norm{r}_R
    \quad\text{for all}\ r\in R .
\]
\end{enumerate}
\begin{remark}
Since we assumed that $K_U$, $K_R$ are bounded linear operators, it follows from \cref{eq:normequivalence2_new} that $\norm{K\cdot}_{M'}$ induces an equivalent norm on $Y$.
\end{remark}
\begin{remark}
\Cref{eq:normequivalence2_new}  can be seen as a natural a-priori estimate for a linear partial differential equation of the form $Ky=g$ , which states that \emph{if} a unique solution to $Ky=g$ exists, then it needs to be bounded by the data $g\in M'$. 
\end{remark}

The next theorem deals with the well-posedness of \cref{eq:optreordered} and offers a corresponding robust preconditioner. The derivation of the preconditioner is constructive in the following sense: Having a good guess for three out of four inner products on the Hilbert spaces $X_i\in\{U,U,Y,R\}$ leading to a robust preconditioner for the optimality system \cref{eq:optreordered}, the remaining fourth inner product follows almost as a gift from the spectral relation \cref{eq:spectralequivalence}.

To be more precise, for the Hilbert spaces $X_1=U$, $X_2=U$, $X_4=R$, we choose inner products corresponding to the operators
\[
P_1=\alpha \mathcal{I}_U,\quad P_2 = \alpha^{-1}\mathcal{I}_U,\quad P_4 = \mathcal{I}_R,
\]
respectively. With this choice the condition \cref{eq:cond1} reads
\[
  \begin{pmatrix} 
    \alpha \, \mathcal{I}_U &  0 \\ 
    0 & P_3
  \end{pmatrix}
  \sim 
  \begin{pmatrix} 
     2\alpha \, \mathcal{I}_U & \alpha \, K_U \\
     \alpha\, K_U' & T'\mathcal{I}_OT + \alpha \, K_U' \mathcal{I}_U^{-1} K_U + K_R' \mathcal{I}_R^{-1} K_R
   \end{pmatrix}.
\]
Then, by \cref{le:appendix3}, the only possible candidate for $P_3$ is given by (up to spectral equivalence)
\begin{equation}\label{eq:necessarycondition}
P_3 =  T'\mathcal{I}_OT + \alpha \, K_U' \mathcal{I}_U^{-1} K_U + K_R' \mathcal{I}_R^{-1} K_R.
\end{equation}
The proof of the next theorem guarantees that \cref{eq:necessarycondition} is not only necessary but also sufficient.
\begin{theorem}
  \label{theo:mainAppPrec}
 Let $\alpha>0$ and assume that \cref{K1,K2} are satisfied.
 
 The linear operator $\mathcal{A}:U\times U\times Y\times R = \bm{X}\longrightarrow \bm{X}'$ defined in \cref{eq:optreordered} is a self-adjoint isomorphism. Furthermore, for the Hilbert space $\bm{X}$ endowed with the inner product
 \begin{equation*}
  \inner{\bm{x}}{\bm{y}}_{\mathcal{P}} = \dual{\mathcal{P}\bm{x}}{\bm{y}}  \quad \text{for all} \ \bm{x},\bm{y} \in \bm{X},
 \end{equation*}
where $\mathcal{P}:\bm{X}\longrightarrow \bm{X}'$ is given by
\begin{equation*}
    \mathcal{P} =     
    \begin{pmatrix} 
      \alpha \mathcal{I}_U &      &        &  \\
          & \alpha^{-1}\mathcal{I}_U  &        &  \\
          &      & T'\mathcal{I}_OT + \alpha \, K_U' \mathcal{I}_U^{-1} K_U + K_R' \mathcal{I}_R^{-1} K_R &  \\
          &      &        & \mathcal{I}_R
  \end{pmatrix},
\end{equation*}
there exist positive constants $\underline{c}$ and $\overline{c}$, both independent of $\alpha\in(0,1]$, such that 
\begin{equation}\label{eq:isoineq}
  \underline{c} \, \|\bm x\|_{\mathcal{P}}
   \le \|\mathcal{P}^{-1}\mathcal{A} \bm x\|_{\mathcal{P}}
   \le \overline{c} \, \|\bm x\|_{\mathcal{P}}
   \quad \text{for all} \ \bm x  \in \bm X .
\end{equation}

Under the stronger \cref{K2'}, the constants in \cref{eq:isoineq} are independent of all $\alpha>0$.
\end{theorem}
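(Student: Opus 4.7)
The plan is to invoke \cref{mainresult_new} and reduce the norm equivalence \cref{eq:isoineq} to checking the spectral relation $\mathcal{P} \sim \mathcal{D} + \mathcal{B}\mathcal{P}^{-1}\mathcal{B}$ with $\alpha$-independent constants. For $n=4$ this is exactly the pair of block equivalences \cref{eq:cond1} and \cref{eq:cond2}. Before starting, I would note that \cref{K1} makes $P_3$ positive definite on $Y$, so that $\mathcal{P}$ genuinely defines an inner product: if $\dual{P_3 y}{y} = 0$, then $Ty$, $K_U y$ and $K_R y$ all vanish, $Ky=0$, and hence $y=0$ by \cref{K1}. By construction the $(2,2)$-entry of the right-hand side of \cref{eq:cond1} equals $P_3$, while a short cancellation rewrites the right-hand side of \cref{eq:cond2} as $\diag(\alpha^{-1}\mathcal{I}_U,0) + KP_3^{-1}K'$ with $K = (K_U,K_R)^\top$; both conditions thus reduce to $2{\times}2$ block comparisons.

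For \cref{eq:cond1} the task is to control the off-diagonal coupling $\alpha K_U$ in
\[
  \begin{pmatrix} 2\alpha\mathcal{I}_U & \alpha K_U \\ \alpha K_U' & P_3 \end{pmatrix}.
\]
I would apply Young's inequality to $2\alpha|\dual{K_U y}{u}|$ and use the crucial fact that $\alpha K_U'\mathcal{I}_U^{-1} K_U$ is an explicit summand of $P_3$, so that $\alpha\norm{K_U y}_{U'}^2 \le \dual{P_3 y}{y}$ holds with a constant independent of $\alpha$. Choosing the Young parameter strictly between $1$ and $2$ then yields both the upper and the lower estimate in \cref{eq:cond1} with $\alpha$-independent constants.

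For \cref{eq:cond2}, the quadratic form of the right-hand side becomes $\alpha^{-1}\norm{u}_U^2 + \norm{K_U' u + K_R' r}_{P_3^{-1}}^2$. The upper bound against $\alpha^{-1}\norm{u}_U^2 + \norm{r}_R^2$ is routine: the individual estimates $\norm{K_U' u}_{P_3^{-1}}^2 \le \alpha^{-1}\norm{u}_U^2$ and $\norm{K_R' r}_{P_3^{-1}}^2 \le \norm{r}_R^2$ follow directly from $P_3$ containing $\alpha K_U'\mathcal{I}_U^{-1} K_U$ and $K_R'\mathcal{I}_R^{-1} K_R$ as summands. The main obstacle, and the only step that genuinely uses \cref{K2} or \cref{K2'}, is the reverse estimate $\norm{r}_R^2 \lesssim \alpha^{-1}\norm{u}_U^2 + \norm{K_U' u + K_R' r}_{P_3^{-1}}^2$.

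To finish this lower bound, I plan to use the inf-sup condition on $K_R$ to bound $\norm{r}_R$ by a supremum of $\dual{K_R' r}{y}/\norm{y}_Y$, combined with the generalized Cauchy--Schwarz inequality $|\dual{v}{y}| \le \norm{v}_{P_3^{-1}}\,\dual{P_3 y}{y}^{1/2}$. Under \cref{K2'} I would restrict the supremum to $y \in \ker K_U$, so that $\dual{K_U' u}{y}=0$ and only the combination $K_U' u + K_R' r$ appears; on $\ker K_U$ the $\alpha$-dependent summand of $P_3$ vanishes, leaving the $\alpha$-independent bound $\dual{P_3 y}{y} \le (\norm{T}^2 + \norm{K_R}^2)\norm{y}_Y^2$, and the estimate then holds uniformly in all $\alpha > 0$. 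Under only the weaker \cref{K2}, I would take the supremum over all $y \in Y$, split $\dual{K_R' r}{y} = \dual{K_U' u + K_R' r}{y} - \dual{K_U' u}{y}$, absorb the extra $\norm{K_U}\norm{u}_U$ into $\alpha^{-1}\norm{u}_U^2$ using $\alpha \le 1$, and rely on $\dual{P_3 y}{y} \le (\norm{T}^2 + \norm{K_U}^2 + \norm{K_R}^2)\norm{y}_Y^2$, which is $\alpha$-independent only under the restriction $\alpha \le 1$. This restriction is therefore the precise origin of the weaker conclusion in the general case.
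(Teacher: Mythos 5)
Your plan is correct and follows the paper's skeleton: reduce \cref{eq:isoineq} via \cref{mainresult_new} to the two block equivalences \cref{eq:cond1} and \cref{eq:cond2}, observe that the $(2,2)$-entry of the right-hand side of \cref{eq:cond1} is exactly $P_3$, and isolate the same crux, namely that \cref{K2} together with $\dual{P_3 y}{y}\lesssim \norm{y}_Y^2$ (valid on all of $Y$ only for $\alpha\le 1$, but on $\ker K_U$ for every $\alpha>0$ under \cref{K2'}) yields the inf-sup-type lower bound involving $K_R$. Where you diverge is in how the two block equivalences are verified. The paper works through \cref{le:appendix3}: for \cref{eq:cond1} it checks the dual Schur complement $P_3-\tfrac12\alpha K_U'\mathcal{I}_U^{-1}K_U\sim P_3$, and for \cref{eq:cond2} it first proves $\mathcal{I}_R\sim K_RP_3^{-1}K_R'$ and then controls the off-diagonal coupling by two applications of \cref{le:appendix2}, obtaining $K_UP_3^{-1}K_R'[K_RP_3^{-1}K_R']^{-1}K_RP_3^{-1}K_U'\le K_UP_3^{-1}K_U'$. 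You instead manipulate the quadratic forms directly: Young's inequality with parameter in $(1,2)$ for \cref{eq:cond1}, and for \cref{eq:cond2} the rewriting of the right-hand side's quadratic form as $\alpha^{-1}\norm{u}_U^2+\norm{K_U'u+K_R'r}_{P_3^{-1}}^2$, with the cross term $\dual{K_U'u}{y}$ either annihilated (on $\ker K_U$, under \cref{K2'}) or absorbed into $\alpha^{-1}\norm{u}_U^2$ using $\alpha\le1$ (under \cref{K2}). Both routes rest on the same structural facts ($\alpha K_U'\mathcal{I}_U^{-1}K_U$ and $K_R'\mathcal{I}_R^{-1}K_R$ are summands of $P_3$, plus the inf-sup condition); yours avoids the Schur-complement lemma at the cost of tracking the coupling by hand, while the paper's version localizes all coupling arguments in \cref{le:appendix2,le:appendix3}. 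Your preliminary remark that \cref{K1} makes $P_3$ coercive (so that $\mathcal{P}$ is an admissible inner product) is a point the paper leaves implicit and is worth stating; note that the coercivity constant of $P_3$ may degenerate with $\alpha$, which is harmless since only the spectral equivalence \cref{eq:spectralequivalence} needs to be $\alpha$-uniform. I see no gap in your plan.
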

\begin{proof}
 Denoting
 \[
  P_3 = T'\mathcal{I}_OT + \alpha \, K_U' \mathcal{I}_U^{-1} K_U + K_R' \mathcal{I}_R^{-1} K_R,
 \]
by \cref{mainresult_new,eq:cond1,eq:cond2}, it suffices to show
 \[
  \begin{pmatrix} 
    \alpha \, \mathcal{I}_U &  0 \\ 
    0 & P_3
  \end{pmatrix}
  \sim 
  \begin{pmatrix} 
     2\alpha \, \mathcal{I}_U & \alpha \, K_U \\
     \alpha\, K_U' & T'\mathcal{I}_OT + \alpha \, K_U' \mathcal{I}_U^{-1} K_U + K_R' \mathcal{I}_R^{-1} K_R
   \end{pmatrix}
\]
and
\begin{equation*}\label{eq:productnormeq}
  \begin{pmatrix} 
    \alpha^{-1} \, \mathcal{I}_U &  0 \\ 
    0 & \mathcal{I}_R
  \end{pmatrix}
  \sim 
  \begin{pmatrix} 
     \alpha^{-1} \, \mathcal{I}_U + K_U P_3^{-1} K_U' & K_U P_3^{-1} K_R' \\
     K_R P_3^{-1} K_U' & K_R P_3^{-1} K_R'
   \end{pmatrix}.
\end{equation*}
The first condition is satisfied, since we have for the associated (dual) Schur complement
\begin{multline*}
  T'\mathcal{I}_OT + \alpha \, K_U' \mathcal{I}_U^{-1} K_U + K_R' \mathcal{I}_R^{-1} K_R - \frac{1}{2} \alpha \, K_U' \mathcal{I}_U^{-1} K_U \\
   =  T'\mathcal{I}_OT + \frac{\alpha}{2} \, K_U' \mathcal{I}_U^{-1} K_U + K_R' \mathcal{I}_R^{-1} K_R 
\sim P_3 .
\end{multline*}

Concerning the second condition, by \cref{le:appendix2}, we have
\[
  K_U P_3^{-1} K_U'
    = K_U [T'\mathcal{I}_OT + \alpha \, K_U' \mathcal{I}_U^{-1} K_U + K_R' \mathcal{I}_R^{-1} K_R]^{-1} K_U' \le \alpha^{-1} \, \mathcal{I}_U,
\]
which implies
\begin{equation}\label{eq:equivalence}
  \alpha^{-1} \, \mathcal{I}_U \sim \alpha^{-1} \, \mathcal{I}_U + K_U P_3^{-1} K_U'.
\end{equation}
We also have
\[
  K_R P_3^{-1} K_R'
    = K_R [T'\mathcal{I}_OT + \alpha \, K_U' \mathcal{I}_U^{-1} K_U + K_R' \mathcal{I}_R^{-1} K_R]^{-1} K_R' \le \mathcal{I}_R.
\]
So, in order to ensure
\begin{equation} \label{nonsing_new}
  \mathcal{I}_R \sim K_R P_3^{-1} K_R',
\end{equation}
it suffices to show
$
  \mathcal{I}_R \lesssim K_R P_3^{-1} K_R',
$
or, equivalently, 
\[
  \sup_{0\neq y \in Y} \frac{\langle K_R y,r\rangle}{\langle P_3 y,y\rangle^{1/2}} \gtrsim \norm{r}_R\quad\text{for all}\ r\in R.
\]
This easily follows from \cref{K2}, since
\[
  \dual{P_3 y}{y} \lesssim \norm{y}^2_Y\quad \text{for all} \ y\in Y,
\]
under the mild condition that $\alpha$ is uniformly bounded, e.g., $\alpha \le 1$ and, therefore,
\[
  \sup_{0\neq y \in Y} \frac{\dual{K_R y}{r}}{\dual{P_3 y}{y}^{1/2}} 
  \gtrsim
  \sup_{0 \neq y \in Y} \frac{\dual{K_R y}{r}}{\norm{y}_Y} \gtrsim \norm{r}_R \quad\text{for all}\ r\in R.
\]
Therefore, \cref{nonsing_new} holds and $K_R P_3^{-1} K_R'$ is non-singular.
Then, by \cref{le:appendix2}, it follows that
\[
 K_R' [K_R P_3^{-1} K_R']^{-1} K_R \leq P_3,
\]
and as a consequence,
\[
 K_U P_3^{-1} K_R' [K_R P_3^{-1} K_R']^{-1}K_R P_3^{-1} K_U' \leq K_U P_3^{-1} K_U'.
\]
Therefore, we obtain for the associated (primal) Schur complement:
\[
  \alpha^{-1} \, \mathcal{I}_U \leq
  \alpha^{-1} \, \mathcal{I}_U + K_U P_3^{-1} K_U'
  - K_U P_3^{-1} K_R' [K_R P_3^{-1} K_R']^{-1}K_R P_3^{-1} K_U'.
\]
The assertion then follows from \cref{eq:equivalence} and \cref{le:appendix3}.

Under no restrictions on $\alpha$ we have
\[
  \dual{P_3 y}{y} \lesssim \norm{y}^2_Y 
  \quad \text{for all} \ y \in \ker K_U,
\]
and, therefore,
\[
  \sup_{0\neq y \in Y} \frac{\dual{K_R y}{r}}{\langle P_3 y,y\rangle^{1/2}}
  \gtrsim
  \sup_{0 \neq y \in \ker K_U} \frac{\dual{K_R y}{r}}{\norm{y}_Y} \gtrsim \norm{r}_R \quad\text{for all}\ r\in R
\]
under the stronger assumption \cref{K2'}.
\end{proof}
\begin{remark}
  \Cref{theo:mainAppPrec} also holds true under the relaxed condition that $T:Y\longrightarrow O$ is invertible on the kernel of $K:Y\longrightarrow M'$, if $\ker K\neq \{0\}$, as it was done in \cite{MarNieNor17}.
\end{remark}

\subsection{Brezzi constants}
In the original ordering the optimality system \cref{eq:optoriginal} can be phrased as a classical saddle point problem:

Find $x=(y,u)\in Y\times U = X$ and $p=(p_U,p_R)\in U\times R = M$ such that
\begin{equation}\label{eq:classicalsystem}
\begin{pmatrix}
A & B' \\
B & 0
\end{pmatrix}
\begin{pmatrix}
x \\ p
\end{pmatrix}
=
\begin{pmatrix}
f \\ g
\end{pmatrix},
\end{equation}
where $A:X\longrightarrow X'$, $B:X\longrightarrow M'$ are given by
\begin{equation*}\label{eq:matricesAB}
A = \begin{pmatrix} T'\mathcal{I}_OT & 0 \\ 0 & \alpha \mathcal{I}_U\end{pmatrix},
\quad
B = \begin{pmatrix} K_U & \mathcal{I}_U \\ K_R & 0 \end{pmatrix},
\end{equation*}
and $f\in X'$, $g\in M'$ are given by
\begin{equation*}
f(w) = \inner{d}{T z }_O,\quad g(q) = g_U(q_U) + g_R(q_R),
\end{equation*}
for all $w=(z, v )\in X$, $q=(q_U,q_R)\in M$, respectively.

In this setting, by Brezzi's theorem (see \cite{BofBreFor13}), well-posedness of \cref{eq:classicalsystem} is equivalent to the following (Brezzi) conditions:
\begin{enumerate}
 \item \label{Brezzi1} The linear operator $A$ is bounded: There exists a positive constant $c_A$ such that
 \[
  \dual{A x}{w}\leq c_A\norm{x}_X\norm{w}_X \quad\text{for all}\ x,w\in X.
 \]
 \item \label{Brezzi2}The linear operator $B$ is bounded: There exists a positive constant $c_B$ such that
 \[
  \dual{B w}{q}\leq c_B\norm{w}_X\norm{q}_M \quad\text{for all}\ w\in X,\ q\in M.
 \]
 \item \label{Brezzi3}The linear operator $A$ is coercive on $\ker B=\{w\in X:\dual{Bw}{q}=0\text{ for all }q\in M\}$:
 There exists a positive constant $\gamma_0$ such that
 \[
  \dual{A w}{w}\geq \gamma_0\norm{w}^2_X\quad\text{for all} \ w\in \ker B.
 \]
 \item \label{Brezzi4}The linear operator $B$ satisfies the inf-sup condition: There exists a positive constant $k_0$ such that
 \[
  \sup_{0\neq w\in X}\frac{\dual{Bw}{q}}{\norm{w}_X}\geq k_0 \norm{q}_M\quad\text{for all} \ q\in M.
 \]
\end{enumerate}
The constants appearing in the four conditions are referred to as Brezzi constants. \Cref{theo:mainAppPrec} already guarantees the existence of $\alpha$-independent Brezzi constants. Their particular values are provided by the following theorem.
\begin{theorem}[Brezzi constants]
 Let $\alpha>0$ and assume that \cref{K1,K2'} are satisfied. For the Hilbert spaces $X$, $M$ endowed with the norms
\begin{equation*}
 \begin{aligned}
\norm{w}^2_{X,\alpha} &= \norm{T z }^2_O + \alpha \norm{K_U  z }^2_{U'} + \norm{K_R  z }^2_{R'} +\alpha\norm{ v }^2_U,\quad &&w=( z , v ) \in X,\\
\norm{q}^2_{M,\alpha} &= \alpha^{-1}\norm{q_U}^2_U +\norm{q_R}^2_R, &&q=(q_U,q_R)\in M,
\end{aligned}
\end{equation*}
respectively, the Brezzi conditions are satisfied with 
 \begin{equation*}
  c_A = 1,\quad c_B=\sqrt{2},\quad \gamma_0=\frac{1}{2},\quad k_0 = \frac{1}{\sqrt{\norm{T}^2_{L(Y,O)}c_K^2+1}},
 \end{equation*}
where the positive constant $c_K$ is from \cref{eq:normequivalence2_new}.
\end{theorem}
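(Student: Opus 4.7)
My plan is to verify the four Brezzi conditions directly and read off the stated constants. Throughout I write $w=(z,v)\in X=Y\times U$ and $q=(q_U,q_R)\in M=U\times R$, and I use the identities
\[
 \langle Aw,\tilde w\rangle = \langle Tz,T\tilde z\rangle_O + \alpha\langle v,\tilde v\rangle_U, \qquad
 \langle Bw,q\rangle = \langle K_Uz,q_U\rangle + \langle v,q_U\rangle_U + \langle K_Rz,q_R\rangle .
\]

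The first three conditions reduce to short computations. For $c_A=1$, I would apply Cauchy--Schwarz to each summand of $\langle Aw,\tilde w\rangle$; since both $\|Tz\|_O^2$ and $\alpha\|v\|_U^2$ appear as summands in $\|w\|_{X,\alpha}^2$, the bound $\|w\|_{X,\alpha}\|\tilde w\|_{X,\alpha}$ follows. For $c_B=\sqrt 2$, I would insert the factors $\alpha^{1/2}\alpha^{-1/2}$ in the first two terms of $\langle Bw,q\rangle$ to align the weights of $\|\cdot\|_{X,\alpha}$ and $\|\cdot\|_{M,\alpha}$, then apply Cauchy--Schwarz in $\mathbb{R}^3$; the fact that $q_U$ occurs in two of those terms on the dual side produces the $\sqrt 2$. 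For $\gamma_0=1/2$, I would characterize
\[
 \ker B = \{(z,-\mathcal{I}_U^{-1}K_Uz) : K_Rz=0\},
\]
on which $\alpha\|v\|_U^2=\alpha\|K_Uz\|_{U'}^2$ and $\|K_Rz\|_{R'}^2=0$, so that
\[
 \langle Aw,w\rangle = \|Tz\|_O^2 + \alpha\|K_Uz\|_{U'}^2, \qquad
 \|w\|_{X,\alpha}^2 = \|Tz\|_O^2 + 2\alpha\|K_Uz\|_{U'}^2,
\]
giving a ratio of at least $1/2$.

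The inf-sup is the substantial step. Given $q=(q_U,q_R)$, the plan is to construct a single test element $w=(z,v)$ with $v=\alpha^{-1}q_U$ and with $z\in\ker K_U$ chosen so that $K_Rz=\mathcal I_Rq_R$. The latter is possible exactly because of \cref{K2'}: via the closed range theorem, the stated inf-sup means that $K_R|_{\ker K_U}\colon\ker K_U\to R'$ is surjective, so every element of $R'$ (in particular the Riesz representative $\mathcal I_R q_R$) is attained. With these choices,
\[
 \langle Bw,q\rangle = \alpha^{-1}\|q_U\|_U^2 + \langle\mathcal I_Rq_R,q_R\rangle = \alpha^{-1}\|q_U\|_U^2 + \|q_R\|_R^2 = \|q\|_{M,\alpha}^2 .
\]
For the norm of $w$, I would use that $z\in\ker K_U$ so that \cref{K1} applied inside $\ker K_U$ gives $\|z\|_Y\le c_K\|K_Rz\|_{R'}=c_K\|q_R\|_R$ and hence $\|Tz\|_O^2\le \|T\|_{L(Y,O)}^2c_K^2\|q_R\|_R^2$. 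Summing the four contributions,
\[
 \|w\|_{X,\alpha}^2 \le \bigl(\|T\|_{L(Y,O)}^2c_K^2+1\bigr)\|q_R\|_R^2 + \alpha^{-1}\|q_U\|_U^2 \le C^2\|q\|_{M,\alpha}^2,
\]
with $C=\sqrt{\|T\|_{L(Y,O)}^2c_K^2+1}$. Division then yields $\langle Bw,q\rangle/\|w\|_{X,\alpha}\ge \|q\|_{M,\alpha}/C$, which is the claimed $k_0=1/C$.

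The main obstacle lies in this last construction. The delicate point is that $K_Rz$ must coincide with $\mathcal I_Rq_R$ (not merely satisfy some inf-sup bound), so that $\langle K_Rz,q_R\rangle$ equals exactly $\|q_R\|_R^2$; and at the same time the observation contribution $\|Tz\|_O$ has to be controlled through $\|K_Rz\|_{R'}$ only, using \cref{K1}. This is precisely what prevents the constant $c_R$ from \cref{K2'} from entering $k_0$---\cref{K2'} is used only qualitatively, for surjectivity. Finally, no $\alpha$-dependence sneaks in because each power of $\alpha$ cancels through the choice $v=\alpha^{-1}q_U$ and the weighting of the norms $\|\cdot\|_{X,\alpha}$ and $\|\cdot\|_{M,\alpha}$.
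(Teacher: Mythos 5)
Your proposal is correct and follows essentially the same route as the paper's proof: Cauchy--Schwarz for $c_A$ and $c_B$ (with the $\sqrt2$ arising from the two terms pairing with $q_U$), the explicit characterization of $\ker B$ for $\gamma_0=1/2$, and for $k_0$ the test element $\hat w=(\hat y,\alpha^{-1}q_U)$ with $\hat y\in\ker K_U$ solving $K_R\hat y=\mathcal I_Rq_R$, combined with \cref{eq:normequivalence2_new} to bound $\|T\hat y\|_O$ by $\|T\|_{L(Y,O)}c_K\|q_R\|_R$. Your observations that \cref{K2'} is used only qualitatively (so $c_R$ does not enter $k_0$) and that the $\alpha$-weights cancel exactly are accurate and consistent with the paper's argument.
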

\begin{proof}
 To prove the first condition, we estimate with the Cauchy-Schwarz inequality as follows: Let $x=(y,u), w=( z , v )\in X$, then
 \begin{align*}
  \dual{A x}{w} &= \inner{Ty}{T z }_O+\alpha\inner{u}{ v }_U
  \leq\sqrt{\norm{Ty}^2_O+\alpha\norm{u}^2_U}\sqrt{\norm{T z }^2_O+\alpha\norm{ v }^2_U}\\
  &\leq\norm{x}_{X,\alpha}\norm{w}_{X,\alpha}.
 \end{align*}

 In an analogous manner we show the second condition: Let $w=( z , v )\in X$, $q=(q_U,q_R)\in M$, then
 \begin{align*}
  \dual{Bw}{q} &= \dual{K_U z }{q_U} + \inner{ v }{q_U}_U + \dual{K_R z }{q_R}\\
  &\leq \sqrt{\alpha}\left(\norm{K_U z }_{U'}+\norm{ v }_U\right)\frac{1}{\sqrt{\alpha}}\norm{q_U}_U+\norm{K_R z }_{R'}\norm{q_R}_R\\
 &\leq\sqrt{\alpha\left(\norm{K_U z }_{U'}+\norm{ v }_U\right)^2 + \norm{K_R z }^2_{R'}}\norm{q}_{M,\alpha}
  \leq \sqrt{2}\norm{w}_{X,\alpha}\norm{q}_{M,\alpha}.
 \end{align*}

 In order to prove the coercivity estimate in the third condition we first note that
 \begin{align*}
  \ker B &= \{w\in X: \dual{Bw}{q}=0 \text{ for all }q\in M\} \\
  &= \{( z , v )\in Y\times U : (K_U z ,K_R z )^\top=(-\mathcal{I}_U v ,0)^\top\}.
 \end{align*}
 Therefore, for $w=( z , v )\in\ker B$ we obtain
 \begin{align*}
  \dual{A w}{w} &= \norm{T z }^2_O + \alpha\norm{ v }^2_U 
  = \norm{T z }^2_O + \frac{\alpha}{2}\norm{K_U z }^2_{U'} +  \frac{\alpha}{2}\norm{ v }^2_U \\
  &= \frac{1}{2}\norm{T z }^2_O+\frac{1}{2}\norm{w}^2_{X,\alpha}
   \geq\frac{1}{2}\norm{w}^2_{X,\alpha}.
 \end{align*}

 To prove the fourth condition let $0\neq q = (q_U,q_R)\in M$ be arbitrary. Under the assumption that $K_R|_{\ker K_U}$ is surjective, we can choose $\hat{y}\in \ker K_U$ such that $K_R\hat{y}=\mathcal{I}_Rq_R\in R'$. Then for $\hat{w}=(\hat{y},\alpha^{-1}q_U)$  we obtain
 \begin{align*}
  \dual{B\hat{w}}{q}&=\dual{K_U\hat{y}}{q_U} + \alpha^{-1}\norm{q_U}^2_U + \dual{K_R\hat{y}}{q_R}\\
  &= \alpha^{-1}\norm{q_U}^2_U + \norm{q_R}^2_R
  = \norm{q}^2_{M,\alpha}.
 \end{align*}
 The assertion then follows from the estimate
 \begin{align*}
  \norm{\hat{w}}^2_{X,\alpha} &= \norm{T\hat{y}}^2_O + \alpha \norm{K_U\hat{y}}^2_{U'} + \norm{K_R\hat{y}}^2_{R'} + \alpha\norm{\alpha^{-1}q_U}^2_U\\
  &\leq \norm{T}^2_{L(Y,O)}\norm{\hat{y}}^2_Y + \norm{q_R}^2_R + \alpha^{-1}\norm{q_U}^2_U\\
  &\leq \norm{T}^2_{L(Y,O)}c_K^2\left(\norm{K_U\hat{y}}^2_{U'} + \norm{K_R\hat{y}}^2_{R'}\right)+ \norm{q_R}^2_R + \alpha^{-1}\norm{q_U}^2_U\\
  &\leq \left(\norm{T}^2_{L(Y,O)}c_K^2+1\right)\left(\norm{q_R}^2_R + \alpha^{-1}\norm{q_U}^2_U\right)\\
  &= \left(\norm{T}^2_{L(Y,O)}c_K^2+1\right)\norm{q}^2_{M,\alpha},
 \end{align*}
where we used \cref{eq:normequivalence2_new} in the second inequality.
\end{proof}

\section{Examples}\label{sec:examples}
Under the \cref{K1,K2}, respectively \labelcref{K2'}, \cref{theo:mainAppPrec} guarantees well-posedness of the optimality system \cref{eq:optreordered} and proposes a robust preconditioner. The question arises for which particular applications, that is, linear PDEs, these conditions are fulfilled.

Usually, the operators $K_U$ and $K_R$ represent the differential operator and the side conditions of the PDE, respectively. Elliptic control problems of this form (with $R=\{0\}$) have been considered in \cite{MarNieNor17}, where the space for the control and the test space for the non-standard variational formulation in strong form of the state equation coincide. In this sense the setting in \cite{MarNieNor17} fits into our framework and, therefore, we will focus here as an alternative on time-dependent problems in the following. 

In the two examples to come we have a linear and bijective state operator $K=(K_U,K_R)^\top:Y\longrightarrow M'$, where $M$ is a product space of Hilbert spaces $U$, $R$ and $Y$ in the first place is just a linear space.
There is a natural way of introducing a canonical Hilbert space structure on $Y$ such that the \cref{K1,K2,K2'} are satisfied.
\begin{lemma}\label{le:onetoone}
 Under the assumptions for $Y$, $M$, and $K:Y\longrightarrow M'$ from above, $Y$ is a Hilbert space endowed with the inner product
\begin{align*}\label{eq:innerproduct}
 \inner{y}{ z }_Y &= \dual{Ky}{\mathcal{I}^{-1}_MK z } \\
 &= \dual{K_Uy}{\mathcal{I}^{-1}_U K_U z } + \dual{K_Ry}{\mathcal{I}^{-1}_R K_R z } \quad\text{for all}\ y, z \in Y,
\end{align*}
where $\mathcal{I}_M$, $\mathcal{I}_U$ and $\mathcal{I}_R$ represent the canonical inner products on $M$, $U$ and $R$, respectively, see Notation 3.

Moreover, for $Y$ equipped with the inner product $\inner{\cdot}{\cdot}_Y$, the state operator $K$ satisfies \cref{K1,K2,K2'}. 
\end{lemma}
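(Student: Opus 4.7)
The plan is to exploit the fact that the inner product is designed so that $K$ becomes an isometric isomorphism between $(Y,\|\cdot\|_Y)$ and $(M',\|\cdot\|_{M'})$; essentially everything then follows immediately from the bijectivity of $K$.

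First I would check that $\inner{\cdot}{\cdot}_Y$ is indeed an inner product. Bilinearity and symmetry are clear from the definition, using self-adjointness of $\mathcal{I}_M^{-1}$. The equivalent expression in terms of $K_U,K_R$ stated in the lemma follows from the block-diagonal identity $\mathcal{I}_M^{-1}=\diag(\mathcal{I}_U^{-1},\mathcal{I}_R^{-1})$ induced by the product structure $M=U\times R$. Positive semi-definiteness is immediate since $\inner{y}{y}_Y = \|Ky\|_{M'}^2 \geq 0$, and definiteness follows from injectivity of $K$: if $\inner{y}{y}_Y=0$ then $Ky=0$, and hence $y=0$.

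Completeness of $(Y,\inner{\cdot}{\cdot}_Y)$ is equally quick. By construction $K$ is an isometry from $(Y,\|\cdot\|_Y)$ onto $M'$, which is itself complete as the dual of a Hilbert space. A Cauchy sequence $(y_n)$ in $Y$ thus maps to a Cauchy sequence $(Ky_n)$ in $M'$; its limit $b\in M'$ can be pulled back by surjectivity of $K$ to a unique $y\in Y$ with $Ky=b$, and then $\|y_n-y\|_Y=\|Ky_n-b\|_{M'}\to 0$.

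It remains to verify \cref{K1,K2,K2'}. \Cref{K1} holds trivially with $c_K=1$, since $\|y\|_Y=\|Ky\|_{M'}$ by definition. For \cref{K2'}, given $r\in R$, set $b=(0,\mathcal{I}_R r)^\top\in M'=U'\times R'$; surjectivity of $K$ yields $\hat y\in Y$ with $K_U\hat y=0$ and $K_R\hat y=\mathcal{I}_R r$. In particular $\hat y\in\ker K_U$, and
\[
 \|\hat y\|_Y^2 = \|K\hat y\|_{M'}^2 = \|\mathcal{I}_R r\|_{R'}^2 = \|r\|_R^2, \qquad \dual{K_R\hat y}{r} = \dual{\mathcal{I}_R r}{r} = \|r\|_R^2,
\]
so $\dual{K_R\hat y}{r}/\|\hat y\|_Y = \|r\|_R$. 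This establishes \cref{K2'} (and a fortiori \cref{K2}) with $c_R=1$. There is no real obstacle here; the lemma is essentially definitional once one notices that the chosen inner product is tailor-made to make $K$ an isometric isomorphism onto $M'$, the only point requiring a moment of care being that surjectivity of $K$ (not merely injectivity) is needed for both completeness and the inf-sup property.
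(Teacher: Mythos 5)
Your proposal is correct and follows essentially the same route as the paper: the inner product makes $K$ an isometry onto $M'$, injectivity gives definiteness, bijectivity gives completeness via pulling back the limit of $(Ky_n)$, \cref{K1} is definitional, and \cref{K2'} follows by solving $Ky=(0,\mathcal{I}_R r)^\top$. Your version merely makes the constants $c_K=c_R=1$ explicit, which the paper leaves implicit.
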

\begin{proof}
 Observe that $\dual{\cdot}{\mathcal{I}^{-1}_M\cdot}$ defines an inner product on $M'$ by the definition of $\mathcal{I}_M$. Since $K$ is linear and injective, it follows that $\inner{\cdot}{\cdot}_Y=\dual{K\cdot}{\mathcal{I}^{-1}_M K\cdot}$ defines an inner product on $Y$.
 
 In order to show that $Y$ is complete with respect to $\norm{\cdot}_Y = \sqrt{\inner{\cdot}{\cdot}_Y}=\norm{K\cdot}_{M'}$ let $(y_k)_{k \in \mathbb{N}}$ be a Cauchy sequence in $Y$. Then $(Ky_k)_{k \in \mathbb{N}}$ is a Cauchy sequence in $M'$ which possesses a limit in $M'$ denoted by $g$. Since $K$ is bijective, there exists a unique $y\in Y$ such that $Ky = g$. Consequently,
 \[
  \norm{y_k-y}_Y = \norm{K(y_k-y)}_{M'} = \norm{Ky_k-g}_{M'}\rightarrow 0\quad \text{as}\quad k\rightarrow \infty.
 \]
 Thus the Cauchy sequence is converging and thus the space $Y$ is complete.
 
 Note that \cref{K1} is trivially satisfied by the definition of $\norm{\cdot}_Y$, that is,
 \[
  \norm{y}_Y = \norm{Ky}_{M'} = \sqrt{\norm{K_Uy}^2_{U'} + \norm{K_Ry}^2_{R'}}\quad\text{for all}\ y\in Y.
 \]
 \Cref{K2'} is fulfilled as well since for any $g_R\in R'$ the system $Ky=(0,g_R)^\top$ is uniquely solvable. Finally, \labelcref{K2'} implies \labelcref{K2}.
\end{proof}

This lemma is needed for the following two examples of the heat equation and the wave equation considered over a bounded domain $\Omega\subset\mathbb{R}^N$, $N\geq1$, with Lipschitz boundary $\partial \Omega$, and over a finite time interval $(0,T)$. Throughout the remaining of the paper we will introduce the space-time cylinder by $Q_T=\Omega\times(0,T)$ and its lateral surface by $\Gamma_T=\partial\Omega\times[0,T]$.

\subsection{Heat equation} 
Consider the heat equation with homogeneous Dirichlet boundary conditions on $\Gamma_T$,
\begin{equation*}
\begin{aligned}
 \partial_t y - \Delta y &= f\quad&&\text{in}&&Q_T,\\
 y &= 0 &&\text{on}&&\Gamma_T,\\
 y(0) &= y_0 &&\text{in}&&\Omega,
 \end{aligned}
\end{equation*}
for given data $f$, $y_0$. For this problem we introduce the following function spaces. 
$L^2(D)$ denotes the standard Lebesgue space of square-integrable functions on a domain $D$. 
$H_0^1(\Omega)$ is the subspace of the standard Sobolev space $H^1(\Omega)$ of functions on $\Omega$ with vanishing trace on $\partial \Omega$. We use the inner product $\inner{v}{w}_{H^1_0(\Omega)}=\inner{\nabla v}{\nabla w}_{L^2(\Omega)}$ for functions $v,w\in H^1_0(\Omega)$. Moreover, let 
\[
  H(\Delta,\Omega) = \{ v \in L^2(\Omega) : \Delta v \in L^2(\Omega) \}
\]
with inner product $(v,w)_{H(\Delta,\Omega)} = (v,w)_{L^2(\Omega)} + (\Delta v,\Delta w)_{L^2(\Omega)}$. 
Finally, for a Hilbert space $H$, $L^2((0,T);H)$ denotes the Bochner space of square-integrable functions from $(0,T)$ to $H$ with inner product
\[
  (f,g)_{L^2((0,T);H)} = \int_0^T (f(t),g(t))_H \ dt.  
\]
Then the well-known solution theory for this initial-boundary value problem can be summarized in the spirit of \cref{le:onetoone}.

\begin{theorem}[{\cite[Chapter 3, Theorem 2.1]{Lad85}}]\label{th:onetooneheat}
 Let $\Omega\subset \mathbb{R}^N$ be a bounded domain with Lipschitz boundary $\partial \Omega$ and $T>0$ finite. Define
\begin{align}
 Y & = \{ y \in L^2((0,T);H_0^1(\Omega) \cap H(\Delta,\Omega)) : \partial_t u \in L^2(Q_T) \}, \label{eq:defYheat} \\
 M & = L^2(Q_T) \times H^1_0(\Omega) . \nonumber
 \end{align}
 Then the operator 
 \begin{equation*}\label{eq:Kheat}
 K: Y\longrightarrow M',\quad y\mapsto
 \begin{pmatrix}
  \inner{\partial_t y - \Delta y}{\cdot}_{L^2(Q_T)} \\
  \inner{y(0)}{\cdot}_{H^1_0(\Omega)}
 \end{pmatrix}
\end{equation*}
is linear and bijective.
\end{theorem}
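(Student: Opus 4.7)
The plan is to prove linearity (which is immediate from the definitions of $\partial_t$, $\Delta$, and the trace at $t=0$) and to establish bijectivity by identifying $M'$ with $L^2(Q_T) \times H^1_0(\Omega)$ via the Riesz isomorphism, which reduces the statement to: for every $f \in L^2(Q_T)$ and every $y_0 \in H^1_0(\Omega)$, there exists a unique $y \in Y$ with $\partial_t y - \Delta y = f$ in $L^2(Q_T)$ and $y(0) = y_0$ in $H^1_0(\Omega)$.

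For injectivity, suppose $y \in Y$ satisfies $\partial_t y - \Delta y = 0$ a.e.\ in $Q_T$ and $y(0) = 0$. Since $Y \hookrightarrow C([0,T]; L^2(\Omega))$ via the standard embedding $L^2(0,T; H^1_0(\Omega)) \cap H^1(0,T; H^{-1}(\Omega)) \hookrightarrow C([0,T]; L^2(\Omega))$, pairing with $y$ and integrating by parts in space gives
\[
\tfrac{1}{2}\tfrac{d}{dt}\|y(t)\|^2_{L^2(\Omega)} + \|\nabla y(t)\|^2_{L^2(\Omega)} = 0,
\]
which, together with $y(0) = 0$, forces $y \equiv 0$. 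For surjectivity, I would use a Faedo--Galerkin scheme in the $L^2$-orthonormal Dirichlet eigenbasis $\{w_k\}$ of $-\Delta$, writing $y_m(t) = \sum_{k=1}^{m} c_k(t)w_k$ with initial datum $y_m(0)$ equal to the projection of $y_0$ onto $\mathrm{span}\{w_1,\ldots,w_m\}$. The Galerkin system is a linear ODE and is uniquely solvable on $[0,T]$. The decisive a priori estimate is obtained by testing with $\partial_t y_m$, which is admissible because $\partial_t y_m$ lies in the Galerkin subspace; after Young's inequality this yields
\[
\|\partial_t y_m\|^2_{L^2(Q_T)} + \|\nabla y_m(T)\|^2_{L^2(\Omega)} \lesssim \|\nabla y_0\|^2_{L^2(\Omega)} + \|f\|^2_{L^2(Q_T)}.
\]
Reading $\Delta y_m = \partial_t y_m - f$ in $L^2(Q_T)$ immediately gives a uniform bound on $y_m$ in $L^2(0,T; H(\Delta,\Omega))$; crucially, since $H(\Delta,\Omega)$ only requires $\Delta y \in L^2(\Omega)$, no elliptic $H^2$-regularity on the Lipschitz domain is needed. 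A uniform $L^2(0,T; H^1_0(\Omega))$-bound follows as well from the same energy argument.

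Extracting a weakly convergent subsequence produces a limit $y \in Y$; passage to the limit in the Galerkin equations against test functions in $\bigcup_m \mathrm{span}\{w_1,\ldots,w_m\}$ (dense in $L^2(Q_T)$) shows $\partial_t y - \Delta y = f$. The main obstacle is verifying the initial condition in the $H^1_0$-sense required by the statement rather than merely in $L^2(\Omega)$: this requires the embedding
\[
L^2(0,T; H^1_0(\Omega) \cap H(\Delta,\Omega)) \cap H^1(0,T; L^2(\Omega)) \hookrightarrow C([0,T]; H^1_0(\Omega)),
\]
which follows from a Lions--Magenes interpolation argument applied to the Hilbert scale $H^1_0 \cap H(\Delta) \subset H^1_0 \subset L^2$. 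Once this is in hand, the trace $y(0) \in H^1_0(\Omega)$ is well-defined and the identification $y(0) = y_0$ follows from the weak-continuity of the trace map and the Galerkin initialisation $y_m(0) \to y_0$ in $H^1_0(\Omega)$. Uniqueness, and hence the full bijectivity, then follows from the injectivity argument above.
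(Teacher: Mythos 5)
The paper offers no proof of this statement: it is quoted verbatim as a classical maximal-regularity result for the heat equation from Ladyzhenskaya \cite[Chapter 3, Theorem 2.1]{Lad85}, so there is no internal argument to compare against. Your reconstruction --- Riesz identification of $M'$, uniqueness by the energy identity, existence by Faedo--Galerkin in the Dirichlet eigenbasis with the $\partial_t y_m$ test function, the bound on $\Delta y_m=\partial_t y_m-P_mf$ in $L^2(Q_T)$ avoiding any $H^2$-regularity on the Lipschitz domain, and recovery of the initial trace in $H^1_0(\Omega)$ via the Lions--Magenes embedding into $C([0,T];D((-\Delta)^{1/2}))=C([0,T];H^1_0(\Omega))$ --- is correct in outline and is essentially the standard proof given in the cited reference.
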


Combining \cref{theo:mainAppPrec} and \cref{le:onetoone} we obtain the following result:
\begin{corollary}\label{co:heat}
 Let $Y$, $M$, and $K:Y\longrightarrow M'$ be as in \cref{th:onetooneheat}. Then $Y$ is a complete space with respect to the norm $\|\cdot\|_Y$, given by
 \[
  \norm{y}^2_Y = \norm{\partial_ty-\Delta y}^2_{L^2(Q_T)} + \norm{y(0)}^2_{H^1_0(\Omega)}\quad \text{for all}\ y \in Y.
 \] 
 Moreover, for any decomposition of $M$ as a product space of Hilbert spaces $U$ and $R$ and any bounded linear observation operator $T:Y\longrightarrow O$, there exists a unique minimizer of the constrained optimization problem \cref{eq:minimizationfunctional,eq:consideredstateequation} which is characterized by the solution of the optimality system \cref{eq:optoriginal}.
\end{corollary}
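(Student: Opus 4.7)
The plan is to proceed by a direct chain of applications of already-proved results: first combine \cref{th:onetooneheat} with \cref{le:onetoone} to establish the Hilbert structure on $Y$ and the stated norm formula, then invoke \cref{theo:mainAppPrec} to obtain unique solvability of the optimality system, and finally identify that unique solution with the unique minimizer of the constrained optimization problem via a standard convex-analysis argument.

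First, \cref{th:onetooneheat} delivers the linear bijective state operator $K \colon Y \to M'$ that is required by \cref{le:onetoone}. That lemma then equips $Y$ with the inner product $\inner{y}{z}_Y = \dual{K_U y}{\mathcal{I}_U^{-1} K_U z} + \dual{K_R y}{\mathcal{I}_R^{-1} K_R z}$, turns it into a Hilbert space with norm $\norm{y}_Y^2 = \norm{K_U y}_{U'}^2 + \norm{K_R y}_{R'}^2$, and simultaneously verifies \cref{K1,K2,K2'}. In the heat-equation setup, $K_U y$ and $K_R y$ are, by definition, the Riesz representations in $L^2(Q_T)'$ and $H^1_0(\Omega)'$ of $\partial_t y - \Delta y$ and $y(0)$, so by the Riesz isomorphism their dual norms collapse to $\norm{\partial_t y - \Delta y}_{L^2(Q_T)}$ and $\norm{y(0)}_{H^1_0(\Omega)}$, respectively. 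This yields both the explicit form of $\norm{\cdot}_Y$ stated in the corollary and the completeness claim.

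For any product-space decomposition $M = U \times R$ into Hilbert factors and any bounded observation $T \colon Y \to O$, all hypotheses of \cref{theo:mainAppPrec} (including the stronger \cref{K2'}) are therefore in place. The theorem guarantees that the operator on the left-hand side of \cref{eq:optreordered} is a self-adjoint isomorphism on $U \times U \times Y \times R$, uniformly in $\alpha > 0$; permuting back the blocks, \cref{eq:optoriginal} likewise possesses a unique solution $(y,u,p_U,p_R)$ for every admissible right-hand side.

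It remains to identify $(y,u)$ with the unique minimizer of \cref{eq:minimizationfunctional,eq:consideredstateequation}. Because $K$ is bijective, the constraint $Ky + Cu = g$ uniquely determines $y = K^{-1}(g - Cu)$ in terms of $u$, so $J$ reduces to a continuous, strictly convex, and coercive quadratic functional of $u \in U$, with strict convexity and coercivity coming from the term $\frac{\alpha}{2}\norm{u}_U^2$; hence a unique minimizer exists. Its first-order optimality conditions are exactly the KKT system \cref{eq:optoriginal} with $(p_U,p_R)$ acting as Lagrange multipliers for the two components of the state equation, and these conditions already have the unique solution produced above. The main obstacle is really just the bookkeeping needed to identify $K_U$ and $K_R$ as Riesz maps, so that the abstract norm $\norm{K \cdot}_{M'}$ from \cref{le:onetoone} reduces to the concrete expression in the statement; once that identification is made, the corollary follows by direct chaining of the already-established results.
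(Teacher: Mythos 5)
Your proposal is correct and follows essentially the same route as the paper, which states \cref{co:heat} simply by ``combining \cref{theo:mainAppPrec} and \cref{le:onetoone}'' (with \cref{th:onetooneheat} supplying the bijective $K$); your identification of the dual norms via the Riesz isometry and the final convexity argument linking the optimality system to the unique minimizer are just explicit spellings-out of steps the paper leaves implicit.
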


\subsection*{Optimal control problem for the heat equation}
 Let $\omega$ be a non-empty open subset of $\Omega$ and denote $q_T = \omega \times (0,T) \subset Q_T$. We consider an optimal control problem of minimizing a tracking-type quadratic cost functional with (possibly) limited observation plus a regularization term, where the constraint is the heat equation. 
 More precisely, for $Y$ given by \cref{eq:defYheat}, we want to minimize the functional
\begin{equation*}
J: Y \times L^2(Q_T) \longrightarrow \mathbb{R},\qquad J(y,u) = \frac{1}{2} \norm{y-d}^2_{L^2(q_T)} + \frac{\alpha}{2}\norm{u}^2_{L^2(Q_T)}, 
\end{equation*}
subject to
\begin{equation*}
 \begin{aligned}
  \inner{\partial_t y - \Delta y}{q_U}_{L^2(Q_T)} + \inner{u}{q_U}_{L^2(Q_T)} &= 0 
  && \forall\ q_U\in L^2(Q_T)
  ,\\
  \inner{y(0)}{q_R}_{H^1_0(\Omega)} &= \inner{y_0}{q_R}_{H^1_0(\Omega)} &&\forall\ q_R\in H^1_0(\Omega),\\
 \end{aligned}
\end{equation*}
for given initial value $y_0\in H^1_0(\Omega)$ and data $d\in L^2(q_T)$.

The optimality system then reads as follows:

Find $(y,u,p_U,p_R)\in Y\times U\times U\times R $ such that
\begin{equation}\label{eq:optsystemheat}
 \begin{pmatrix}
  T'\mathcal{I}_O T & 0 & K_U' & K_R'  \\
  0 & \alpha \mathcal{I}_U & \mathcal{I}_U & 0 \\
  K_U & \mathcal{I}_U & 0 & 0 \\
  K_R & 0 & 0 & 0 
 \end{pmatrix}
 \begin{pmatrix}
  y\\u\\p_U\\p_R
 \end{pmatrix}
 =
\begin{pmatrix}
 \inner{d}{T\cdot}_{L^2(q_T)} \\ 0 \\ 0 \\ \inner{y_0}{\cdot}_{H^1_0(\Omega)}
\end{pmatrix}
,
\end{equation}
with the spaces $Y$ given by \cref{eq:defYheat},
\[
 U = L^2(Q_T),\quad R = H^1_0(\Omega), \quad O = L^2(q_T) ,
\]
and the operators
\begin{equation}
 \begin{aligned}
  T &: Y\longrightarrow O,\quad&&y\mapsto y|_{q_T},\\
  K_U&:Y\longrightarrow U',&&y\mapsto \inner{\partial_t y - \Delta y}{\cdot}_{L^2(Q_T)} ,\\
  K_R&:Y\longrightarrow R',&& y\mapsto \inner{y(0)}{\cdot}_{H^1_0(\Omega)}.
 \end{aligned}
\end{equation}

It follows from \cref{co:heat} that the system \cref{eq:optsystemheat} is well-posed. Additionally, by \cref{theo:mainAppPrec}, the $\mathcal{P}$-norm leading to an $\alpha$-robust preconditioner is given by
\begin{multline*}
 \norm{( z , v ,q_U,q_R)}^2_\mathcal{P} 
 = \norm{ z }^2_{L^2(q_T)} + \alpha \norm{\partial_t  z  - \Delta  z }^2_{L^2(Q_T)}+\norm{\nabla  z (0)}^2_{\LLO}\\
+ \alpha \norm{ v }^2_{L^2(Q_T)} + \frac{1}{\alpha}\norm{q_U}^2_{L^2(Q_T)} + \norm{\nabla q_R}^2_{\LLO}.
\end{multline*}

\subsection{Wave equation} \label{ssec:wave}
Consider the wave equation with homogeneous Dirichlet boundary conditions on $\Gamma_T$,
\begin{equation*}
\begin{aligned}
 \partial_{tt} y - \Delta y &= f\quad&&\text{in}&&Q_T,\\
 y &= 0 &&\text{on}&&\Gamma_T,\\
 y(0) &= y_0 &&\text{in}&&\Omega,\\
 \partial_ty(0) &= y_1 &&\text{in}&&\Omega,
 \end{aligned}
\end{equation*}
for given data $f$, $y_0$, $y_1$. For this problem we need one further function space. $C([0,T]; H)$ denotes the space of continuous functions from $[0,T]$ to a Hilbert space $H$. 
Then the well-known solution theory for this initial-boundary value problem can be summarized in the spirit of \cref{le:onetoone}.
\begin{theorem}[{\cite[Chapter 3, Theorem 8.2]{LioMag72a}}]\label{th:onetoonewave}
 Let $\Omega\subset \mathbb{R}^N$ be a bounded domain with Lipschitz boundary $\partial \Omega$ and $T>0$ finite. Define
\begin{align}
 Y & = \{ y\in C([0,T];H^1_0(\Omega)):\partial_ty\in C([0,T];L^2(\Omega)),\,\partial_{tt}y-\Delta y \in L^2(Q_T)\}, \label{eq:defYwave}\\
 M & = L^2(Q_T)\times H^1_0(\Omega) \times L^2(\Omega).\nonumber
\end{align}
 Then the operator
 \begin{equation}\label{eq:Kwave}
 K: Y\longrightarrow M',\quad y\mapsto
 \begin{pmatrix}
  \inner{\partial_{tt}y-\Delta y}{\cdot}_{L^2(Q_T)} \\
  \inner{y(0)}{\cdot}_{H^1_0(\Omega)}\\
  \inner{\partial_ty(0)}{\cdot}_{L^2(\Omega)}
 \end{pmatrix}
 ,
\end{equation}
is linear and bijective.
\end{theorem}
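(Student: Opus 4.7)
Linearity of $K$ is immediate from the linearity of $\partial_{tt} - \Delta$ (as an operator from $Y$ into $L^2(Q_T)$) and of the trace maps $y \mapsto y(0)$, $y \mapsto \partial_t y(0)$, which are well-defined on $Y$ because the definition of $Y$ forces $y \in C([0,T];H^1_0(\Omega))$ and $\partial_t y \in C([0,T];L^2(\Omega))$. Using the Riesz isomorphisms on each factor of $M$, I identify $M'$ with $L^2(Q_T) \times H^1_0(\Omega) \times L^2(\Omega)$; under this identification, the equation $Ky = (f,y_0,y_1)$ is precisely the initial-boundary value problem
\[
 \partial_{tt} y - \Delta y = f \ \text{in} \ L^2(Q_T), \quad y(0)=y_0 \ \text{in} \ H^1_0(\Omega), \quad \partial_t y(0)=y_1 \ \text{in} \ L^2(\Omega).
\]
Thus bijectivity of $K$ is equivalent to existence and uniqueness of a solution $y \in Y$ for every such triple of data.

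\textbf{Injectivity.} I would argue by the standard energy method. If $Ky=0$, then $y$ solves the homogeneous Dirichlet problem with $y(0)=0$, $\partial_t y(0)=0$ and $\partial_{tt} y - \Delta y = 0$. Testing the equation formally with $\partial_t y$ and integrating by parts in space yields
\[
  \tfrac{1}{2}\tfrac{d}{dt}\!\left(\|\partial_t y(t)\|_{L^2(\Omega)}^2 + \|\nabla y(t)\|_{L^2(\Omega)}^2\right) = 0,
\]
so the energy is conserved; since it vanishes at $t=0$, both $\partial_t y$ and $\nabla y$ are zero, hence $y \equiv 0$. The regularity of elements of $Y$ is not quite enough to perform this test directly, so I would first smooth $y$ in time by convolution with a mollifier, derive the identity for the mollified function, and pass to the limit using $\partial_{tt}y - \Delta y \in L^2(Q_T)$ together with $y \in C([0,T];H^1_0(\Omega))$ and $\partial_t y \in C([0,T];L^2(\Omega))$.

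\textbf{Surjectivity.} For given $(f,y_0,y_1) \in L^2(Q_T) \times H^1_0(\Omega) \times L^2(\Omega)$, I would appeal to the classical well-posedness result for the linear wave equation with Dirichlet boundary conditions, namely \cite[Chapter~3, Theorem~8.2]{LioMag72a}, which produces a solution $y$ with exactly the regularity built into $Y$: $y \in C([0,T];H^1_0(\Omega))$ and $\partial_t y \in C([0,T];L^2(\Omega))$. By construction $\partial_{tt} y - \Delta y = f \in L^2(Q_T)$, so $y \in Y$. Alternatively, one can obtain the same result via a Galerkin scheme on an orthonormal basis of $H^1_0(\Omega)$ consisting of Dirichlet eigenfunctions of $-\Delta$: the resulting finite-dimensional ODEs are solvable explicitly, the a priori energy estimate
\[
  \|y\|_{C([0,T];H^1_0(\Omega))} + \|\partial_t y\|_{C([0,T];L^2(\Omega))} \le C\bigl(\|y_0\|_{H^1_0(\Omega)} + \|y_1\|_{L^2(\Omega)} + \|f\|_{L^2(Q_T)}\bigr)
\]
transfers to the limit, and the initial conditions are preserved since the relevant traces are continuous on $C([0,T];H^1_0(\Omega))$ and $C([0,T];L^2(\Omega))$, respectively.

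\textbf{Main obstacle.} The delicate point is the \emph{continuous} (not merely $L^\infty$) dependence on $t$ in both $H^1_0(\Omega)$ and $L^2(\Omega)$, which is what allows the traces $y(0)$ and $\partial_t y(0)$ to be defined unambiguously and which is baked into the definition of $Y$. This is precisely the content of the Lions--Magenes regularity theorem cited above and is typically established either by passing through the abstract semigroup framework for skew-adjoint generators or by the density/smoothing argument used in the uniqueness step. I would therefore structure the proof as above and cite \cite{LioMag72a} for this regularity, as the authors clearly intend.
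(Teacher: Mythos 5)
The paper gives no proof of this theorem at all --- it is quoted directly from the literature, with the entire content deferred to the cited result of Lions--Magenes --- and your proposal ultimately rests on the same citation for the essential point (existence of a solution with the continuous-in-time regularity built into $Y$), so the two approaches coincide where it matters. The scaffolding you add (the Riesz identification of $M'$, the reduction of bijectivity to unique solvability of the initial-boundary value problem, and the mollified energy argument for uniqueness) is standard and correct.
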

Combining \cref{theo:mainAppPrec} and \cref{le:onetoone} we obtain the following result:
\begin{corollary}\label{co:wave}
 Let $Y$, $M$, and $K:Y\longrightarrow M'$ be as in \cref{th:onetoonewave}. Then $Y$ is a complete space with respect to
 \[
  \norm{y}^2_Y = \norm{\partial_{tt}y-\Delta y}^2_{L^2(Q_T)} + \norm{y(0)}^2_{H^1_0(\Omega)}+ \norm{\partial_ty(0)}^2_{L^2(\Omega)}\quad \text{for all}\ y \in Y.
 \] 
 Moreover, for any decomposition of $M$ as a product space of Hilbert spaces $U$ and $R$ and any bounded linear observation operator $T:Y\longrightarrow O$ there exists a unique minimizer of the constrained optimization problem \cref{eq:minimizationfunctional,eq:consideredstateequation} which is characterized by the solution of the optimality system \cref{eq:optoriginal}.
\end{corollary}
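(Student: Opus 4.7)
The proof is entirely analogous to that of \cref{co:heat}, being a direct combination of \cref{th:onetoonewave}, \cref{le:onetoone}, and \cref{theo:mainAppPrec}. The plan is as follows.

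First, I would invoke \cref{th:onetoonewave} to conclude that the state operator $K : Y \to M'$ is linear and bijective, where $M = L^2(Q_T)\times H^1_0(\Omega)\times L^2(\Omega)$ is a product of Hilbert spaces. This places us exactly in the setting of \cref{le:onetoone}, so $Y$ equipped with $\inner{y}{z}_Y = \dual{Ky}{\mathcal{I}_M^{-1}Kz}$ becomes a Hilbert space. Because the canonical inner product on $M$ splits across its three components, the induced norm on $Y$ factors as
\[
 \norm{y}^2_Y = \norm{Ky}^2_{M'} = \norm{\partial_{tt}y-\Delta y}^2_{L^2(Q_T)} + \norm{y(0)}^2_{H^1_0(\Omega)} + \norm{\partial_t y(0)}^2_{L^2(\Omega)},
\]
giving the stated formula for $\norm{\cdot}_Y$.

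Second, \cref{le:onetoone} further delivers that the full operator $K$ satisfies \cref{K1,K2,K2'}. These hypotheses are preserved under any decomposition $M = U\times R$ into a product of Hilbert spaces: \cref{K1} is automatic from the definition of $\norm{\cdot}_Y$, while \cref{K2'} follows because bijectivity of $K$ ensures that $Ky = (0,g_R)^\top$ is solvable for every $g_R \in R'$, with the solution automatically lying in $\ker K_U$. Thus for any such decomposition and any bounded linear observation operator $T : Y \to O$, the assumptions of \cref{theo:mainAppPrec} are met.

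Third, \cref{theo:mainAppPrec} then identifies the operator $\mathcal{A}$ of the optimality system \cref{eq:optoriginal} (equivalently \cref{eq:optreordered} after reordering) as a self-adjoint isomorphism on $\bm{X} = U\times U\times Y\times R$. Since the cost functional $J$ is strictly convex and coercive in $(y,u)$ on the affine constraint set (the regularization term $\frac{\alpha}{2}\norm{u}^2_U$ provides coercivity in $u$, and the state equation $Ky + Cu = g$ together with injectivity of $K$ determines $y$ from $u$), standard Lagrange multiplier theory for convex quadratic problems identifies the unique minimizer with the unique solution of \cref{eq:optoriginal}, whose existence and uniqueness are guaranteed by invertibility of $\mathcal{A}$.

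There is no real obstacle in this proof; the work was done in \cref{th:onetoonewave,le:onetoone,theo:mainAppPrec}, and only routine bookkeeping remains. The one point worth verifying carefully is that a split $M = U\times R$ arbitrary enough to accommodate genuinely partial control and partial initial data still leaves \cref{K2'} intact, but this is exactly what the bijectivity of $K$ provided by \cref{th:onetoonewave} guarantees via the general mechanism of \cref{le:onetoone}.
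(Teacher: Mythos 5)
Your proposal is correct and follows exactly the route the paper intends: Theorem \ref{th:onetoonewave} gives bijectivity of $K$, Lemma \ref{le:onetoone} then equips $Y$ with the stated (complete) norm and yields \ref{K1} and \ref{K2'} for any splitting $M=U\times R$, and Theorem \ref{theo:mainAppPrec} gives well-posedness of the optimality system, which by standard convex quadratic optimization characterizes the unique minimizer. The paper offers no further argument beyond ``combining'' these results, so your additional remarks on why \ref{K2'} survives an arbitrary decomposition and on the equivalence of the optimality system with the minimization problem are welcome but routine elaborations of the same proof.
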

In a previous work we considered the problem of controlling the initial condition of the wave equation,
\[
 y(0) = u,\quad u\in U = H^1_0(\Omega),
\]
see \cite{BeiSchSogZul19} for further details. This particular study was the starting point for our current work. Here, we want to demonstrate the flexibility of our approach and consider in the following a constrained optimization problem for the wave equation where we control the differential expression
\[
 \partial_{tt} y - \Delta y + u = 0,\quad u\in U = L^2(Q_T).
\]

\subsection*{Optimal control problem for the wave equation}
 Let $\omega$ be a non-empty open subset of $\Omega$ and denote $q_T = \omega \times (0,T) \subset Q_T$. We consider an optimal control problem of minimizing a tracking-type quadratic cost functional with (possibly) limited observation plus a regularization term where the constraint is the wave equation. More precisely, for $Y$ given by \cref{eq:defYwave}, we want to minimize the functional
\begin{equation*}
J: Y \times L^2(Q_T)\longrightarrow \mathbb{R},\qquad J(y,u) = \frac{1}{2} \norm{y-d}^2_{L^2(q_T)} + \frac{\alpha}{2}\norm{u}^2_{L^2(Q_T)}, 
\end{equation*}
subject to
\begin{equation*}
 \begin{aligned}
  \inner{\partial_{tt} y - \Delta y }{q_U}_{L^2(Q_T)} + \inner{u}{q_U}_{L^2(Q_T)} &= 0 &&\forall\ q_U\in L^2(Q_T),\\
  \inner{y(0)}{q_{R_1}}_{H^1_0(\Omega)} &= \inner{y_0}{q_{R_1}}_{H^1_0(\Omega)} &&\forall\ q_{R_1}\in H^1_0(\Omega),\\
  \inner{\partial_t y(0)}{q_{R_2}}_{L^2(\Omega)} &= \inner{y_1}{q_{R_2}}_{L^2(\Omega )} &&\forall\ q_{R_2}\in L^2(\Omega),
 \end{aligned}
\end{equation*}
for given initial values $(y_0,y_1)\in  H^1_0(\Omega) \times L^2(\Omega)$ and data $d\in L^2(q_T)$.

The optimality system then reads as follows:

Find $(y,u,p_U,p_{R_1},p_{R_2})\in Y\times U\times U\times R_1 \times R_2$ such that
\begin{equation}\label{eq:optsystemwave}
 \begin{pmatrix}
  T'\mathcal{I}_O T & 0 & K_U' & K_{R_1}' & K_{R_2}' \\
  0 & \alpha \mathcal{I}_U & \mathcal{I}_U & 0 & 0\\
  K_U & \mathcal{I}_U & 0 & 0 & 0\\
  K_{R_1} & 0 & 0 & 0 & 0\\
  K_{R_2} & 0 & 0 & 0 & 0
 \end{pmatrix}
 \begin{pmatrix}
  y\\u\\p_U\\p_{R_1}\\p_{R_2}
 \end{pmatrix}
=
\begin{pmatrix}
 \inner{d}{T\cdot}_{L^2(q_T)} \\ 0 \\ 0 \\ \inner{y_0}{\cdot}_{H^1_0(\Omega)} \\ \inner{y_1}{\cdot}_{L^2(\Omega)}
\end{pmatrix},
\end{equation}
with the spaces $Y$ given by \cref{eq:defYwave},
\[
 U = L^2(Q_T),\quad R_1 = H^1_0(\Omega),\quad R_2 = L^2(\Omega), \quad O = L^2(q_T),
\]
and the operators
\begin{equation}\label{eq:AbsToWaveMap}
 \begin{aligned}
  T &: Y\longrightarrow O,\quad&&y\mapsto y|_{q_T},\\
  K_U&:Y\longrightarrow U',&&y\mapsto \inner{\partial_{tt}y-\Delta y}{\cdot}_{L^2(Q_T)},\\
  K_{R_1}&:Y\longrightarrow R_1',&& y\mapsto \inner{y(0)}{\cdot}_{H^1_0(\Omega)},\\
  K_{R_2}&:Y\longrightarrow R_2',&& y\mapsto \inner{\partial_t y(0)}{\cdot}_{L^2(\Omega)}.
 \end{aligned}
\end{equation}

It follows from \cref{co:wave} that the system \cref{eq:optsystemwave} is well-posed. Additionally, by \cref{theo:mainAppPrec}, the $\mathcal{P}$-norm leading to an $\alpha$-robust preconditioner is given by
\begin{multline}
  \label{eq:contPrec}
  \norm{( z , v ,q_U,q_{R_1},q_{R_2})}^2_\mathcal{P} \\
 = \norm{ z }^2_{L^2(q_T)} + \alpha \norm{\wave{ z }}^2_{L^2(Q_T)}+\norm{\nabla  z (0)}^2_{\LLO}+\norm{\partial_t  z (0)}^2_{L^2(\Omega)} \\
+ \alpha \norm{ v }^2_{L^2(Q_T)} + \frac{1}{\alpha}\norm{q_U}^2_{L^2(Q_T)} + \norm{\nabla q_{R_1}}^2_{\LLO} + \norm{q_{R_2}}^2_{\LLO}.
\end{multline}

\section{Discretization and numerical experiments} \label{sec:numerics}
In order to illustrate the theoretical results we shortly discuss in this section (as one selected example) the discretization of the optimality system \cref{eq:optsystemwave} of the optimal control problem for the wave equation and present some first numerical results.

\subsection{Discretization}\label{ssec:disc}
We consider conforming discretization spaces; that is,
\begin{equation} \label{eq:conform}
Y_h\subset Y,\quad 
U_h\subset L^2(Q_T), \quad
R_{1,h}\subset H^1_0(\Omega) \quad
\text{and}\quad 
R_{2,h}\subset L^2(\Omega).
\end{equation}
Let $K_{U_h}$, $K_{R_{1,h}}$, $K_{R_{2,h}}$ be the matrix representations of the linear operators $K_{U}$, $K_{R_1}$, $K_{R_2}$ defined in \cref{eq:AbsToWaveMap}, on $Y_h$, $U_h$, $R_{h,1}$, $R_{h,2}$ relative to the chosen bases in these spaces, respectively.
Let $M_{Q_T,h}$ and $M_{q_T,h}$ be the matrix representations of the linear operators $\mathcal{I}_U$ and $T' \mathcal{I}_O T$ on $U_h$ and $Y_h$, respectively. 
These matrices are mass matrices, they represent the inner products $\inner{\cdot}{\cdot}_{L^2(Q_T)}$ and $\inner{\cdot}{\cdot}_{L^2(q_T)}$ on $U_h$ and $Y_h$, respectively.

Applying Galerkin's principle to \cref{eq:optsystemwave} leads to the following linear problem:

Find $(y_h, u_h, p_{U_h}, p_{R_{1,h}}, p_{R_{2,h}}) \in Y_h \times U_h \times U_h \times R_{1,h} \times R_{2,h}$ such that
\begin{equation}
  \label{eq:discSysMat}
  \begin{pmatrix}
    M_{q_T,h} & 0 & K^T_{U_h}& K^T_{R_{1,h}}& K^T_{R_{2,h}}\\
    0 & \alpha M_{Q_T,h} & M_{Q_T,h} & 0 & 0\\
    K_{U_h} &  M_{Q_T,h} & 0 & 0 & 0\\
    K_{R_{1,h}} &  0  & 0 & 0 & 0\\
    K_{R_{2,h}} &  0  & 0 & 0 & 0\\
  \end{pmatrix}
    \begin{pmatrix}
    \underline{y}_h\\
    \underline{u}_h\\
    \underline{p}_{U_h}\\
    \underline{p}_{R_{1,h}}\\
    \underline{p}_{R_{2,h}}    
    \end{pmatrix}
    =
    \begin{pmatrix}
    \underline{d}_h\\
    0\\
    0\\
    \underline{y}_{1,h}\\
    \underline{y}_{2,h}    
    \end{pmatrix},
\end{equation}
where $d_h$, $y_{2,h}$ are $L^2$ projections of $d$, $y_2$ on $Y_h$, $R_{2,h}$, respectively, $y_{1,h}$ is the $H^1_0$ projection of $y_1$ on $R_{1,h}$, and underlined quantities denote the vector representations of the corresponding functions from $Y_h$, $U_h$, $R_{1,h}$, $R_{2,h}$ relative to the chosen bases in these spaces. 

Motivated by the analysis of the continuous problem we propose the following block diagonal preconditioner, which is the matrix representation of the $\mathcal{P}$-inner product on the discretization spaces:
\begin{equation}
  \label{eq:precDisc}
  \mathcal{P}_h = 
  \begin{pmatrix}
   P_{Y_h} & 0 & 0 & 0 & 0\\
   0 & \alpha \, P_{U_h} & 0 & 0 & 0\\
   0 & 0 & \alpha^{-1} \,  P_{U_h} & 0 & 0\\
   0 &  0  & 0 & P_{R_{1,h}} & 0\\
   0 &  0  & 0 & 0 & P_{R_{2,h}}\\
 \end{pmatrix}
 \end{equation}
 with
 \begin{align*}
   \dual{P_{Y_h} \underline{y}_h}{\underline{z}_h} &= \inner{y_h}{z_h}_{L^2(q_T)}+ \alpha \inner{\partial_{tt} y_h-\Delta y_h}{\partial_{tt}z_h - \Delta z_h }_{L^2(Q_T)}\\
   & \quad {} + \inner{\nabla y_h(0)}{\nabla z_h(0)}_{L^2(\Omega)} +\inner{\partial_t y_h(0)}{\partial_t  z_h(0)}_{L^2(\Omega)},\\
   \dual{P_{U_h} \underline{u}_h}{\underline{v}_h} &=  \inner{u_h}{v_h}_{\LLQ},\\
   \dual{P_{R_{1,h}} \underline{p}_{R_{1,h}}}{\underline{q}_{R_{1,h}}} &=  \inner{\nabla p_{R_{1,h}}}{\nabla q_{R_{1,h}}}_{\LLO},\\
    \dual{P_{R_{2,h}} \underline{p}_{R_{2,h}}}{\underline{q}_{R_{2,h}}} &= \inner{p_{R_{2,h}}}{q_{R_{2,h}}}_{\LLO}.
 \end{align*}
 
The preconditioner $\mathcal{P}_h$ is a symmetric and positive definite block diagonal matrix. This matrix is also sparse provided basis functions with local support are chosen.

Observe that the $\alpha$-robust preconditioner which results from applying \cref{theo:mainAppPrec} directly to the discrete problem is similar to \cref{eq:precDisc}, but with $P_{Y_h}$ replaced by
\begin{align}
  \begin{split}
       \dual{\widetilde{P}_{Y_h} \underline{y}_h}{\underline{z}_h} &= \inner{y_h}{z_h}_{L^2(q_T)}+ \alpha \dual{K'_{U_h}\mathcal{I}_{U_h}^{-1}K_{U_h} y_h}{z_h}\\
   &+ \langle K'_{R_{1,h}}\mathcal{I}_{R_{1,h}}^{-1}K_{R_{1,h}} y_h, z_h\rangle + \langle K'_{R_{2,h}}\mathcal{I}_{R_{2,h}}^{-1}K_{R_{2,h}} y_h,z_h\rangle.
   \end{split}
\end{align}

In general, $\widetilde{P}_{Y_h}$ is not sparse. Therefore, the application of the corresponding block diagonal preconditioner $\widetilde{\mathcal{P}}_h$ is rather costly. On the other hand, the choice $\widetilde{P}_{Y_h}$ would ensure $\alpha$-robustness of $\widetilde{\mathcal{P}}_h$ by \cref{theo:mainAppPrec} provided \cref{K1} and \cref{K2} hold.  
In the next lemma we present sufficient conditions on the discretization spaces, $Y_h$, $U_h$, $R_h$ which ensure that $P_{Y_h} = \widetilde{P}_{Y_h}$ as well as \cref{K1}.  
\begin{lemma}
   \label{lemma:sparseYh}
   Assume the discretization space $Y_h$ is of the following form
   \begin{equation} \label{eq:tensor}
   Y_h = Y^t_h\otimes Y^x_h,
   \end{equation}
   where $Y^t_h$ is the time discretization and $Y^x_h$ space discretization.
   If the following conditions hold
   \begin{equation} \label{eq:inclusion}
   (\partial_{tt} -\Delta) Y_h \subset U_h, \quad Y^x_h \subset R_{1,h},  \quad Y^x_h \subset R_{2,h},
   \end{equation}
   then 
   $
   \widetilde{P}_{Y_h} = P_{Y_h}
   $
   and \cref{K1} holds for the discretized state equation with the same constant $c_K$ as for the continuous state equation.
 \end{lemma}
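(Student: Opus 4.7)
The plan is to show both claims by a direct computation, exploiting the inclusions in \cref{eq:inclusion} to trivialize the Riesz maps that appear in $\widetilde{P}_{Y_h}$ and in the discrete dual norms.

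First, for the identity $\widetilde{P}_{Y_h} = P_{Y_h}$, I would rewrite each term of the form $\langle K'_{X_h}\mathcal{I}_{X_h}^{-1} K_{X_h} y_h, z_h\rangle$ as $(\mathcal{I}_{X_h}^{-1} K_{X_h} y_h, K_{X_h} z_h)_{X_h}$ and identify $\mathcal{I}_{X_h}^{-1} K_{X_h} y_h$ as the Riesz representer of a functional in $X_h'$. For $X_h = U_h$, this functional is $(\partial_{tt} y_h - \Delta y_h, \cdot)_{L^2(Q_T)}$; since by hypothesis $\partial_{tt} y_h - \Delta y_h \in U_h$, its Riesz representative is itself, yielding $(\partial_{tt} y_h - \Delta y_h, \partial_{tt} z_h - \Delta z_h)_{L^2(Q_T)}$. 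For $X_h = R_{1,h}$, the tensor structure \cref{eq:tensor} ensures $y_h(0) \in Y_h^x \subset R_{1,h}$, so the Riesz step is again trivial and the term reduces to $(\nabla y_h(0), \nabla z_h(0))_{L^2(\Omega)}$. The argument for $R_{2,h}$ is analogous, using $\partial_t y_h(0) \in Y_h^x \subset R_{2,h}$. Adding these to the common observation term gives exactly $P_{Y_h}$.

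For the discrete version of \cref{K1}, the observation is that the inclusions in \cref{eq:inclusion} also trivialize the suprema defining the discrete dual norms. Concretely, I would argue
\[
  \|K_{U_h} y_h\|_{U_h'}
     = \sup_{0 \neq u_h \in U_h} \frac{(\partial_{tt} y_h - \Delta y_h, u_h)_{L^2(Q_T)}}{\|u_h\|_{L^2(Q_T)}}
     = \|\partial_{tt} y_h - \Delta y_h\|_{L^2(Q_T)},
\]
the last equality following because $\partial_{tt} y_h - \Delta y_h$ is itself admissible as test function. An identical argument with $y_h(0)$ and $\partial_t y_h(0)$ in $R_{1,h}$ and $R_{2,h}$ gives $\|K_{R_{1,h}} y_h\|_{R_{1,h}'} = \|y_h(0)\|_{H^1_0(\Omega)}$ and $\|K_{R_{2,h}} y_h\|_{R_{2,h}'} = \|\partial_t y_h(0)\|_{L^2(\Omega)}$. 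Hence the discrete dual norm coincides with the continuous one, $\|K_h y_h\|_{M_h'} = \|K y_h\|_{M'}$, and applying the continuous estimate \cref{eq:normequivalence2_new} to $y_h \in Y_h \subset Y$ yields \cref{K1} discretely with the same constant $c_K$.

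The only delicate point is verifying that $\partial_t y_h(0)$ indeed lies in $Y_h^x$, which I would handle by writing $y_h = \sum_i \varphi_i(t)\, \psi_i(x)$ with $\psi_i \in Y_h^x$, $\varphi_i \in Y_h^t$, so that $\partial_t y_h(0) = \sum_i \varphi_i'(0)\, \psi_i \in Y_h^x$; the same expansion shows $y_h(0) \in Y_h^x$. Beyond this bookkeeping, both conclusions follow immediately from the elementary fact that the Riesz identification $\mathcal{I}_{X_h}^{-1}$ collapses to the identity whenever the functional's representing element already lives in the discrete space.
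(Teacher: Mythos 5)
Your proof is correct and follows essentially the same route as the paper: both arguments reduce to the observation that the inclusions \cref{eq:inclusion} make the discrete Riesz representatives (equivalently, the maximizers in the dual-norm suprema of \cref{le:appendix1}) coincide with $\partial_{tt}y_h-\Delta y_h$, $y_h(0)$ and $\partial_t y_h(0)$ themselves, so the discrete dual norms equal the continuous ones and \cref{K1} transfers with the same $c_K$. The only cosmetic difference is that you work directly with the bilinear forms (and explicitly verify $y_h(0),\,\partial_t y_h(0)\in Y_h^x$ via the tensor expansion), whereas the paper establishes equality of the quadratic forms and concludes $P_{Y_h}=\widetilde{P}_{Y_h}$ by polarization.
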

 \begin{proof}
   Let $y_h \in Y_h$ be arbitrary but fixed. Using Lemma \ref{le:appendix1} we get
   \begin{align*}
     \dual{K'_{U_h}\mathcal{I}_{U_h}^{-1}K_{U_h} y_h}{y_h} = \sup_{u_h\in U_h}\frac{\dual{K_{U_h}y_h}{u_h}^2}{\inner{u_h}{u_h}_{L^2(Q_T)}} = \sup_{u_h\in U_h}\frac{\inner{(\partial_{tt}-\Delta) y_h}{u_h}^2_{L^2(Q_T)}}{\inner{u_h}{u_h}_{L^2(Q_T)}}.
   \end{align*}
   Since $(\partial_{tt} -\Delta) Y_h \subset U_h$, the supremum is attained for $u_h = (\partial_{tt}-\Delta) y_h$, and we have
   \begin{align*}
     \sup_{u_h\in U_h}\frac{\inner{(\partial_{tt}-\Delta) y_h}{u_h}^2_{L^2(Q_T)}}{\inner{u_h}{u_h}_{L^2(Q_T)}}
       = \|(\partial_{tt}-\Delta) y_h\|_{L^2(Q_T)}^2 .
   \end{align*}
   Therefore, 
   \[
     \dual{K'_{U_h}\mathcal{I}_{U_h}^{-1}K_{U_h} y_h}{y_h}
       = \|(\partial_{tt}-\Delta) y_h\|_{L^2(Q_T)}^2 .
   \]
   Similarly it follows that
   \[
     \langle K'_{R_{1,h}}\mathcal{I}_{R_{1,h}}^{-1}K_{R_{1,h}} y_h, y_h\rangle
      = \|y_h(0)\|_{H_0^1(\Omega)}^2
    \]
    and
    \[
     \langle K'_{R_{2,h}}\mathcal{I}_{R_{2,h}}^{-1}K_{R_{2,h}} y_h, y_h\rangle
      = \|\partial_t y_h(0)\|_{L^2(\Omega)}^2 .
   \]
This shows that the $P_{Y_h}$-norm and the $\widetilde{P}_{Y_h}$-norm coincide and, therefore, the associated inner products coincide. This implies $P_{Y_h} = \widetilde{P}_{Y_h}$.

The three identities from above can be rewritten as
\[
  \|K_{U_h} y_h\|_{U_h'} = \|K_{U} y_h\|_{U'}
\]
and
\[
  \|K_{R_{1,h}} y_h\|_{R_{1,h}'} = \|K_{R_1} y_h\|_{R_1'}, \quad
  \|K_{R_{2,h}} y_h\|_{R_{2,h}'} = \|K_{R_2} y_h\|_{R_2'},
\]
from which it immediately follows that \cref{K1} for the continuous state operator $K$ implies \cref{K1} for the state operator $K_h = (K_{U_h},K_{R_{1,h}},K_{R_{2,h}})^\top$ of the discretized problem.
   \end{proof}

So, under the assumptions of \cref{lemma:sparseYh}, the preconditioner $\mathcal{P}_h = \widetilde{\mathcal{P}}_h$ is sparse and \cref{K1} holds for the discretized problem with a constant independent of the discretization spaces. 
In order to apply \cref{theo:mainAppPrec} and ensure $\alpha$-robustness of the preconditioner \cref{K2} resp.~ \cref{K2'} is required. The particular form of $Y_h$ in \cref{lemma:sparseYh} essentially means that a time-marching scheme for the spatially discretized wave equation is used. Each reasonable scheme of this form produces well-defined approximate solutions for a prescribed right-hand side of the wave equation and prescribed initial data. This ensures the surjectivity of the operators as required in \cref{K2} and \cref{K2'}. Consequently, the preconditioner is $\alpha$-robust, but it is not clear how the constants $c_R$  in \cref{K2} and \cref{K2'} depend on the discretization spaces. An analysis of this dependency is beyond the scope of this paper. Instead we present in the next section numerical experiments for a particular choice of the discretization spaces and report on promising preliminary numerical results.

\subsection{Numerical results}\label{ssec:results}
We consider the optimal control problem from \cref{ssec:wave} with
$\Omega = (0,1)^2$, $\omega = (1/4,3/4)^2$, $T = 1$
and homogeneous data. The following discretization spaces are used:
\begin{align*}
Y_h &= S_{p,\ell}(0,T)\otimes \left[ S_{p,\ell}(\Omega)\cap H^1_0(\Omega) \right],\\
U_h &= S_{p,\ell,p-3}(0,T)\otimes S_{p,\ell,p-3}(\Omega),\\
R_{1,h} &= S_{p,\ell}(\Omega)\cap H^1_0(\Omega),\\
R_{2,h} &= S_{p,\ell}(\Omega).
\end{align*}
Here, $S_{p,\ell, k}(a,b)$ denotes the space of splines of degree $p$ on an equidistant knot span of the interval $(a,b)$ of mesh size $h = (b-a)/2^\ell$ which are $k$-times continuously differentiable. Spline spaces of maximal continuity, i.e., $k = p-1$ are denoted $S_{p,\ell}(a,b)$.
Spline spaces on $\Omega$ are defined as tensor products of univariate splines spaces.
It is easy to see that the chosen discretization spaces satisfy \cref{eq:conform}, \cref{eq:tensor} and \cref{eq:inclusion} for spline degree $p \ge 2$. 

We use the sparse preconditioner $\mathcal{P}_h$ from  \cref{eq:precDisc}. The application of the preconditioner $\mathcal{P}_h$ requires the multiplication of the inverses of its diagonal blocks with vectors. The action of the inverse of $P_{U_h}$ and  $P_{R_{1,h}}$ is efficiently computed by exploiting the tensor product structure and computing the inverse of univariate mass matrices. For $P_{Y_h}$ and $P_{R_{2,h}}$ sparse direct solvers are used.

The preconditioned system is solved using the minimal residual method (MINRES) with random initial starting vector. The stopping criteria is the reduction of the Euclidean norm of initial residual error by a factor of $10^{-8}$.

\cref{t:ItNumEq1} for $p=2$ and \cref{t:ItNumEq2} for $p=3$ show the degree of freedoms (DoFs) of the systems for several levels $\ell$ of refinements and the iteration numbers of MINRES for different values of $\alpha$.

\begin{table}[tbhp]
{\footnotesize
\caption{Iteration numbers using $p = 2$ 
}\label{t:ItNumEq1}
\begin{center}
  \begin{tabular}{| c || c | c | c | c | r |}
    \hline
    $\ell \backslash \alpha$ & $10^0$ & $10^{-3}$ & $10^{-6}$ & $10^{-9}$ & DoFs\\ \hline \hline
    $2$  &   88 & 36& 51& 21&      3\ 604\\  \hline   
    $3$  &  169 & 38& 48& 33&     28\ 452\\  \hline
    $4$  &  255 & 35& 43& 46&    226\ 372\\  \hline
    $5$  &  380 & 39& 36& 53& 1\ 806\ 468\\  \hline
  \end{tabular}
\end{center}
}
\end{table}

\begin{table}[tbhp]
{\footnotesize
\caption{Iteration numbers using $p = 3$
}\label{t:ItNumEq2}
\begin{center}
  \begin{tabular}{| c || c | c | c | c | r |}
    \hline
    $\ell \backslash \alpha$   & $10^0$ & $10^{-3}$ & $10^{-6}$ & $10^{-9}$ & DoFs\\ \hline \hline
    $2$  &  126 & 37& 53& 57&      4\ 643\\  \hline   
    $3$  &  189 & 35& 48& 62&     32\ 343\\  \hline
    $4$  &  302 & 38& 43& 59&    241\ 439\\  \hline
    $5$  &  381 & 38& 39& 55& 1\ 865\ 775\\  \hline
  \end{tabular}
\end{center}
}
\end{table}
 
Reasonably small iteration numbers were observed for $0 < \alpha \ll 1$. For $\alpha = 1$, the iteration numbers are significantly larger. As expected the performance of MINRES does not deteriorate for small values $\alpha$. The dependence on the mesh size $h$ is moderate. 

\begin{remark}
For more complex domains isogeometric analysis, see c.f. \cite{VeiBufSanVaz14,HugCotBaz05}, can be used to obtain smooth conforming discretization subspaces, and multi-patch domains can be dealt with with methods described in 
\cite{BirKap19} and the references within. For large-scale problems sparse direct solvers eventually fail due to memory limitations. The methods described in \cite{GeoHof19} can then be considered.
\end{remark}

\appendix
\section{Auxiliary results}\label{sec:app}
The calculations in \cref{sec:optimalcontrol} rely on three auxiliary lemmas, frequently used in literature, which are listed in the following. For convenience of the reader we present the proof of the first two and refer to \cite{AxeGus83} for the third one.
\begin{lemma}\label{le:appendix1}
 Let $A:V\longrightarrow V'$ and $B:V\longrightarrow Q'$ be linear operators, where $V$ and $Q$ are Hilbert spaces with dual spaces $V'$ and $Q'$.
 Additionally assume that $A$ is self-adjoint and coercive. Then we have
 \[
  \dual{BA^{-1}B' q}{q} = \sup_{0\neq v\in V}\frac{\dual{Bv}{q}^2}{\dual{Av}{v}}\quad\text{for all} \ q\in Q.
 \]
\end{lemma}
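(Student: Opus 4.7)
The plan is to use the $A$-inner product induced by the self-adjoint coercive operator $A$, under which both sides of the claimed identity can be re-expressed. Since $A$ is self-adjoint and coercive, the bilinear form $(u,v) \mapsto \dual{Au}{v}$ is an inner product on $V$ (call it $\inner{\cdot}{\cdot}_A$) equivalent to the original one, and by the Lax--Milgram lemma $A:V\longrightarrow V'$ is an isomorphism. In particular $A^{-1}B'q$ is well-defined for every $q \in Q$.

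Fix $q\in Q$ and set $w=A^{-1}B'q\in V$. Then for every $v\in V$ one has
\[
  \dual{Bv}{q}=\dual{B'q}{v}=\dual{Aw}{v}=\inner{w}{v}_A,
\]
so the numerator in the supremum is the square of an $A$-inner product and the denominator is an $A$-squared norm. The Cauchy--Schwarz inequality in $\inner{\cdot}{\cdot}_A$ then gives
\[
  \dual{Bv}{q}^2 = \inner{w}{v}_A^2 \le \inner{w}{w}_A\,\inner{v}{v}_A = \dual{Aw}{w}\,\dual{Av}{v},
\]
which, after dividing by $\dual{Av}{v}>0$ for $v\neq 0$, yields the upper bound
$\sup_{v\neq 0}\dual{Bv}{q}^2/\dual{Av}{v}\le \dual{Aw}{w}$. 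Finally I would unpack the right-hand side: using $Aw=B'q$,
\[
  \dual{Aw}{w}=\dual{B'q}{w}=\dual{Bw}{q}=\dual{BA^{-1}B'q}{q}.
\]

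To verify equality (not just $\le$), I would exhibit a maximizer. If $B'q\ne 0$, then $w\ne 0$ and the choice $v=w$ saturates Cauchy--Schwarz, giving $\dual{Bw}{q}^2/\dual{Aw}{w}=\dual{Aw}{w}=\dual{BA^{-1}B'q}{q}$. If $B'q=0$, then $w=0$, hence the right-hand side of the claimed identity is $\dual{Bw}{q}=0$, while $\dual{Bv}{q}=\dual{B'q}{v}=0$ for all $v$, so the supremum is also $0$. There is no real obstacle here; the only subtlety is the trivial edge case $B'q=0$, which must be handled separately so that the supremum is well-defined and still equals the right-hand side.
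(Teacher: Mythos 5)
Your proof is correct and follows essentially the same route as the paper's: both arguments reduce the identity to the Cauchy--Schwarz characterization of a norm induced by $A$, the paper working with the $A^{-1}$-inner product on $V'$ and then substituting $g=Av$, while you work directly with the $A$-inner product on $V$ and exhibit $v=w=A^{-1}B'q$ as the maximizer. The explicit treatment of the edge case $B'q=0$ is a small but welcome addition.
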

\begin{proof}
 Observe that
 \[
  \dual{BA^{-1}B' q}{q} = \dual{B'q}{A^{-1}B'q} = \norm{B'q}^2_{A^{-1}}
 \]
with the norm $\norm{f}_{A^{-1}}= \inner{f}{f}^{1/2}_{A^{-1}}$ on $V'$, given by the inner product $\inner{f}{g}_{A^{-1}}=\dual{f}{A^{-1}g}$ on $V'$. By Cauchy's inequality it easily follows for any inner product and associated norm
\[
 \norm{f} = \sup_{0\neq g}\frac{\inner{f}{g}}{\norm{g}}.
\]
In particular, we have
\[
 \norm{B'q}^2_{A^{-1}} = \sup_{0\neq g\in V'}\frac{\inner{g}{B'q}^2_{A^{-1}}}{\norm{g}^2_{A^{-1}}} = \sup_{0\neq g\in V'}\frac{\dual{g}{A^{-1}B'q}^2}{\dual{g}{A^{-1}g}}.
\]
By substituting $g$ by $Av$ it follows that
\[
 \sup_{0\neq g\in V'}\frac{\dual{g}{A^{-1}B'q}^2}{\dual{g}{A^{-1}g}}
 = \sup_{0\neq v\in V}\frac{\dual{Av}{A^{-1}B'q}^2}{\dual{Av}{v}}
 = \sup_{0\neq v\in V}\frac{\dual{B'q}{v}^2}{\dual{Av}{v}}
 = \sup_{0\neq v\in V}\frac{\dual{Bv}{q}^2}{\dual{Av}{v}}.
\]
\end{proof}
\begin{lemma}\label{le:appendix2}
 Let $A:V\longrightarrow V'$, $B:V\longrightarrow Q'$, and $C:Q\longrightarrow Q'$ be linear operators, where $V$ and $Q$ are Hilbert spaces with dual spaces $V'$ and $Q'$. Additionally assume that $A$ and $C$ are self-adjoint and coercive. Then the condition
 \[
  \dual{BA^{-1}B'q}{q}\leq\dual{Cq}{q}\quad\text{for all} \ q\in Q
 \]
is equivalent to the condition
\[
 \dual{B'C^{-1}Bv}{v}\leq \dual{Av}{v}\quad\text{for all} \ v\in V.
\]
\end{lemma}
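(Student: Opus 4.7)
The plan is to reduce both inequalities to the single symmetric inequality
\[
  \dual{Bv}{q}^2 \leq \dual{Av}{v} \, \dual{Cq}{q}
  \quad \text{for all} \ v \in V, \ q \in Q,
\]
using \cref{le:appendix1} applied twice (once in each direction).

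First, I would apply \cref{le:appendix1} directly to rewrite
\[
  \dual{BA^{-1}B'q}{q} = \sup_{0\neq v\in V}\frac{\dual{Bv}{q}^2}{\dual{Av}{v}}.
\]
Then the first condition $\dual{BA^{-1}B'q}{q}\leq\dual{Cq}{q}$ for all $q\in Q$ is equivalent to the pointwise inequality displayed above, since requiring the supremum to be bounded by $\dual{Cq}{q}$ is the same as requiring the unsuppressed ratio to be so bounded for every $v\neq 0$; the case $v = 0$ is trivial.

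Next, I would swap the roles of $V$ and $Q$ and apply \cref{le:appendix1} with $A$ replaced by $C$ and $B$ replaced by $B'$ (noting $(B')' = B$ since $V$, $Q$ are Hilbert spaces). This yields
\[
  \dual{B'C^{-1}Bv}{v} = \sup_{0\neq q\in Q}\frac{\dual{Bv}{q}^2}{\dual{Cq}{q}},
\]
so that the second condition is likewise equivalent to the same symmetric inequality. Concluding the equivalence of the two stated conditions is then immediate.

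There is no real obstacle here; the only minor point of care is verifying that $C$ self-adjoint and coercive makes \cref{le:appendix1} applicable with $A \leftrightarrow C$ and $B \leftrightarrow B'$, and that the pairing $\dual{B'q}{v} = \dual{Bv}{q}$ is used correctly in both directions.
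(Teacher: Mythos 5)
Your proposal is correct and follows essentially the same route as the paper: both reduce the two conditions to the symmetric inequality $\dual{Bv}{q}^2 \leq \dual{Av}{v}\,\dual{Cq}{q}$ via two applications of \cref{le:appendix1}, once with $(A,B)$ and once with $(C,B')$. Nothing to add.
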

\begin{proof}
 Using \cref{le:appendix1} it immediately follows that the first condition is equivalent to the condition
 \[
  \dual{Bv}{q}^2\leq \dual{Av}{v}\dual{Cq}{q} \quad\text{for all} \ v\in V,\, q\in Q
 \]
 and the second condition is equivalent to the condition
 \[
  \dual{B'q}{v}^2 \leq \dual{Av}{v}\dual{Cq}{q}\quad \text{for all} \ v\in V,\, q\in Q.
 \]
These two new conditions are obviously equivalent.
\end{proof}
\begin{lemma}\label{le:appendix3}
 Let $V$ and $Q$ be Hilbert spaces with dual spaces $V'$ and $Q'$.
 
 Let $\mathcal{M}:V\times Q\longrightarrow V'\times Q'$ be a self-adjoint and positive definite linear operator of a $2$-by-$2$ block form
 \[
  \mathcal{M} =
  \begin{pmatrix}
   M_{11} & M_{12} \\
   M_{21} & M_{22}
  \end{pmatrix},
 \]
and $\mathcal{D}:V\times Q\longrightarrow V'\times Q'$ be of $2$-by-$2$ block diagonal form with self-adjoint and positive definite diagonal blocks
\[
 \mathcal{D} = 
  \begin{pmatrix}
   D_{11} & 0 \\
   0 & D_{22}
  \end{pmatrix}.
\]
Then  $\mathcal{M} \sim \mathcal{D}$ if and only if
\[
 M_{11} \sim D_{11},\quad M_{22} \sim D_{22}\quad \text{and} \quad M_{11} \lesssim M_{11} - M_{12}M_{22}^{-1}M_{21}.
\]
\end{lemma}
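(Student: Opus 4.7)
The plan is to prove each direction separately, handling the symmetric diagonal conditions by a simple test-vector argument and addressing the Schur complement condition through the block factorization of $\mathcal{M}$ combined with the two preceding auxiliary lemmas.

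For the direction $\mathcal{M} \sim \mathcal{D} \Rightarrow \text{(the three conditions)}$, I would first extract $M_{11} \sim D_{11}$ and $M_{22} \sim D_{22}$ by testing $\mathcal{M} \sim \mathcal{D}$ on pure coordinate vectors $(v,0)$ and $(0,q)$, which eliminates the off-diagonal term and gives $\dual{M_{11}v}{v} \sim \dual{D_{11}v}{v}$ and $\dual{M_{22}q}{q} \sim \dual{D_{22}q}{q}$ directly. The Schur bound $M_{11} \lesssim S := M_{11} - M_{12}M_{22}^{-1}M_{21}$ requires more: I would exploit the block factorization identity
\[
\dual{\mathcal{M}(v,q)}{(v,q)} = \dual{Sv}{v} + \dual{M_{22}(q + M_{22}^{-1}M_{21}v)}{q + M_{22}^{-1}M_{21}v},
\]
which shows that $\dual{Sv}{v} = \min_q \dual{\mathcal{M}(v,q)}{(v,q)}$. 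Substituting the minimizing $q = -M_{22}^{-1}M_{21}v$ into the inequality $\mathcal{D} \lesssim \mathcal{M}$ makes the right-hand side collapse to $\dual{Sv}{v}$, while the left-hand side remains bounded below by $\dual{D_{11}v}{v} \sim \dual{M_{11}v}{v}$, delivering $M_{11} \lesssim S$.

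For the converse direction, the upper estimate $\mathcal{M} \lesssim \mathcal{D}$ is easy: since $\mathcal{M}$ is positive definite, Cauchy--Schwarz applied to the $\mathcal{M}$-form on $(v,0)$ and $(0,q)$ gives $|\dual{M_{21}v}{q}|^2 \le \dual{M_{11}v}{v}\dual{M_{22}q}{q}$, hence $\dual{\mathcal{M}(v,q)}{(v,q)} \le 2(\dual{M_{11}v}{v} + \dual{M_{22}q}{q}) \lesssim \dual{\mathcal{D}(v,q)}{(v,q)}$ using $M_{ii} \sim D_{ii}$. The lower estimate $\mathcal{D} \lesssim \mathcal{M}$ is the main obstacle and is where the Schur hypothesis does the real work. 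I would first rewrite $M_{11} \lesssim S$ as $M_{12}M_{22}^{-1}M_{21} \le \gamma M_{11}$ for some $\gamma \in [0,1)$, and then apply \cref{le:appendix2} (with $A = M_{22}$, $B = M_{21}$, $C = \gamma^{-1}M_{11}$) to obtain the dual form $M_{12}M_{11}^{-1}M_{21} \le \gamma M_{22}$. Combining this with \cref{le:appendix1} yields $\dual{M_{12}q}{v}^2 \le \gamma\dual{M_{11}v}{v}\dual{M_{22}q}{q}$, whence
\[
2|\dual{M_{12}q}{v}| \le \varepsilon\dual{M_{11}v}{v} + \gamma\varepsilon^{-1}\dual{M_{22}q}{q}
\]
for any $\varepsilon > 0$. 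Choosing $\varepsilon = \sqrt{\gamma} \in (\gamma,1)$ and inserting into $\dual{\mathcal{M}(v,q)}{(v,q)}$ leaves the quadratic form bounded below by $(1-\sqrt{\gamma})(\dual{M_{11}v}{v} + \dual{M_{22}q}{q}) \gtrsim \dual{\mathcal{D}(v,q)}{(v,q)}$.

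The main obstacle in this plan is the lower bound just sketched: naively, the off-diagonal coupling $M_{12}$ could be arbitrarily large relative to $M_{11}, M_{22}$ separately, and only the fact that it is quantitatively controlled relative to the Schur complement prevents cancellation in the indefinite cross term. Translating the given ``primal'' Schur bound to the ``dual'' Schur bound via \cref{le:appendix2} is the key technical move that makes Young's inequality work cleanly; without this step, one is stuck with a shifted argument $q + M_{22}^{-1}M_{21}v$ coming from the block factorization, which is not directly comparable to $\dual{D_{22}q}{q}$.
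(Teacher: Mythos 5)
Your argument is correct, and in fact the paper does not contain a proof of this lemma at all: the authors explicitly defer it to the reference [AxeGus83], so there is nothing internal to compare against. What you supply is a self-contained proof built from the paper's own toolkit, which is a genuine addition. The forward direction (testing $\mathcal{M}\sim\mathcal{D}$ on $(v,0)$ and $(0,q)$ for the diagonal equivalences, then inserting the minimizer $q=-M_{22}^{-1}M_{21}v$ from the block factorization to isolate the Schur complement) is the standard route and is sound, since positive definiteness of $\mathcal{M}$ guarantees that $M_{22}$ is coercive and hence invertible. The converse is also handled correctly: the upper bound needs only Cauchy--Schwarz in the $\mathcal{M}$-form, and your key move for the lower bound --- rewriting $M_{11}\lesssim M_{11}-M_{12}M_{22}^{-1}M_{21}$ as $M_{12}M_{22}^{-1}M_{21}\le\gamma M_{11}$ with $\gamma\in[0,1)$ and dualizing via \cref{le:appendix2} to get a strengthened Cauchy--Schwarz inequality $\dual{M_{21}v}{q}^2\le\gamma\,\dual{M_{11}v}{v}\,\dual{M_{22}q}{q}$ --- is exactly what makes Young's inequality with $\varepsilon=\sqrt{\gamma}$ close the argument. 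Two cosmetic points: the dualized operator should read $M_{21}M_{11}^{-1}M_{12}\le\gamma M_{22}$ (your index order $M_{12}M_{11}^{-1}M_{21}$ does not type-check as a map $Q\to Q'$), and the degenerate case $\gamma=0$ (i.e.\ $M_{12}=0$) should be dispatched separately since $\varepsilon=\sqrt{\gamma}=0$ is not admissible, though the cross term then vanishes anyway.
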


\section*{Acknowledgments}
The work of the first author was supported by the Austrian Science Fund (FWF), project I3661-N27.
The work of the second author was supported by the Austrian Science Fund (FWF), project P31048.
The work of the third author was supported by the Austrian Science Fund (FWF), project S11702-N23.

\bibliographystyle{siamplain}
\bibliography{bibliography.bib}
\end{document}